\numberwithin{equation}{section} 
\newcommand{\ud}{\,d} 
\newcommand{\R}{\mathbb{R}}
\renewcommand\P{{\mathcal P}}
\newcommand\cof{\operatorname{cof}}
\newcommand\diam{\operatorname{diam}}
\newcommand{\tir}[1]{\ensuremath{\overline {#1}}} 
\newtheorem{thm}{Theorem}[section] 
\newtheorem{lemma}[thm]{Lemma} 
\newtheorem{prop}[thm]{Proposition} 
\newtheorem{cor}[thm]{Corollary} 
\newtheorem{defn}[thm]{Definition} 
\newtheorem{rem}[thm]{Remark}
\def\whsq{\vbox to 5.8pt 
{\offinterlineskip\hrule 
\hbox to 5.8pt{\vrule height 
5.1pt\hss\vrule height 5.1pt}\hrule}}
\def\<{\langle} 
\def\>{\rangle} 
\def\PP{{\mathop{{\rm I}\kern-.2em{\rm P}}\nolimits}} 
\def\FF{{\mathop{{\rm I}\kern-.2em{\rm F}}\nolimits}}   
\def\ZZ{{\mathop{{\rm I}\kern-.2em{\rm Z}}\nolimits}} 
\newlength{\sidemargin} 
\begin{document}
\title[]{
Standard finite elements for the numerical resolution of the elliptic Monge-Amp\`ere equation: Aleksandrov solutions}

%\thanks{Not for dissemination, September 3, 2012 }

\author{Gerard Awanou}
\address{Department of Mathematics, Statistics, and Computer Science, M/C 249.
University of Illinois at Chicago, 
Chicago, IL 60607-7045, USA}
\email{awanou@uic.edu}  
\urladdr{http://www.math.uic.edu/\~{}awanou}

\maketitle

\begin{abstract}
We prove a convergence result for a natural discretization of the Dirichlet problem of the elliptic Monge-Amp\`ere equation using finite dimensional spaces of piecewise polynomial $C^0$ or $C^1$ functions. Standard 
discretizations of the type considered in this paper have been previous analyzed in the case the equation has a smooth solution and numerous numerical evidence of convergence were given in the case of non smooth solutions.
Our convergence result is valid for non smooth solutions, is given in the setting of Aleksandrov solutions, and consists in discretizing the equation in a subdomain with the boundary data used as an approximation of the solution in the remaining part of the domain. Our result gives a theoretical validation for the use of a non monotone finite element method for the Monge-Amp\`ere equation.

%\noindent {\it Keywords}: weak convergence, Monge-Amp\`ere measure, Aleksandrov solution, finite elements

\end{abstract}

\section{Introduction}
In this paper we prove a convergence result for the numerical approximation of  solutions to  the Dirichlet problem for the Monge-Amp\`ere 
equation
\begin{equation}
\det  D^2 u = f  \ \text{in} \, \Omega, \, u=g \, \text{on} \, \partial \Omega, 
\label{m1}
\end{equation}
by elements of a space $V_h$ of  piecewise polynomial functions of some degree $k \geq 2$ which are either globally $C^0$ or globally $C^1$. %or by elements of a space $V_h^0$ of $C^0$ piecewise polynomials of degree $k \geq 2$. 
The domain $\Omega \subset \mathbb{R}^d, d=2,3$ is assumed to be convex with polygonal 
boundary $\partial \Omega$. The expression $\det D^2 u$ should be understood in the sense of Aleksandrov c.f. section \ref{def-alex}. 
For a smooth function $u$, $D^2 u=\bigg( (\partial^2 u) / (\partial x_i \partial x_j)\bigg)_{i,j=1,\ldots, d} $ 
is the Hessian of $u$
and $f,g$ are given functions on $\Omega$ satisfying $f \in C(\Omega)$ with $0 < c_0 \leq f \leq c_1$ for constants  $c_0, c_1 \in \R$. %$f \in C(\tir{\Omega})$ %$0 < c_0 \leq f \leq c_1$ with  $c_0, c_1 \in \R$ 
%and %$g \in C(\tir{\Omega})$. % with $g$ convex on any line segment contained in $\partial \Omega$.
We assume that $g \in C(\partial \Omega)$ can be extended to a function $\tilde{g} \in C(\tir{\Omega})$ which is  convex on $\Omega$. 

Let $f_m, g_m \in C^{\infty}(\tir{\Omega})$ such that $0 < c_2 \leq f_m \leq c_3$, $f_m$ converges uniformly to $f$ on $\tir{\Omega}$  and $g_m$ converges uniformly to $\tilde{g}$ on $\tir{\Omega}$. See for example \cite{MongeC1Alex}. Let $u_m \in C(\tir{\Omega})$ denote the 
Aleksandrov solution of the problem
\begin{equation}
\det  D^2 u_m = f_m  \ \text{in} \, \Omega, \, u_m=g_m \, \text{on} \, \partial \Omega. 
\label{m1m}
\end{equation}
Finally let $\widetilde{\Omega}$ be a convex polygonal subdomain of $\Omega$. We prove that the problem: 
find $u_{h} \in V_h(\widetilde{\Omega})$, $u_{h}=u_{m }$ on $\tir{\Omega} \setminus \widetilde{\Omega}$ and
\begin{equation} \label{main-problem}
\sum_{K \in \mathcal{T}_h(\widetilde{\Omega}) } \int_{K  } (\det  D^2 u_{h} - f_{ m} ) v_h \ud x=0, \forall v_h \in V_h(\widetilde{\Omega}) \cap H_0^1(\widetilde{\Omega}),
\end{equation}
has a (locally unique) piecewise strictly convex solution $u_h$ on $\widetilde{\Omega}$ which converges uniformly on compact subsets of $\widetilde{\Omega}$ to the solution $\tilde{u}$ of %\eqref{m1}. 
\begin{equation}
\det  D^2 \tilde{u} = f_{m}  \ \text{in} \, \widetilde{\Omega}, \, \tilde{u}=u_{m} \, \text{on} \, \tir{\Omega} \setminus \widetilde{\Omega}, 
\label{m2}
\end{equation}
which is convex on $\widetilde{\Omega}$ and continuous up to the boundary of $\widetilde{\Omega}$.

Here $\mathcal{T}_h(\widetilde{\Omega})$ denotes a quasi-uniform triangulation of the domain $\widetilde{\Omega}$ and  $V_h(\widetilde{\Omega})$ denotes a finite element space on $\widetilde{\Omega}$ of piecewise polynomial $C^r$ functions of degree $k$ for $r=0,1$. We make the abuse of notation of writing $u_{h}=u_{m}$ to mean that our approximations are discontinuous on the boundary and that $u_h$ coincides with $u_{m}$ at the Lagrange points on $\partial \widetilde{\Omega}$. For simplicity, we do not indicate the dependence of $\tilde{u}$ on $m$.

\subsection{Relevance of the convergence result for practical computations} \label{relevance}

Problems in affine geometry motivated the study of the Dirichlet problem for the Monge-Amp\`ere equation. See for example \cite{Baginski96} for a numerical study of the Gauss-curvature equation which is a Monge-Amp\`ere type equation. The Monge-Amp\`ere equation also appears in several applications, e.g. optimal transport and reflector design, but with the so-called second boundary condition, a term used to indicate that this type of boundary condition was studied much later than the Dirichlet problem. Formally, the numerical study of the second boundary condition can be reduced to a sequence of Dirichlet problems using a simple gradient descent. %We wish to address the extension of the framework of this paper to the second boundary condition in a separate work.

Recently, several researchers have used a standard discretization of the type considered in this paper for the numerical study of the reflector design problem \cite{Brix}. Even if one uses the same type of discretization for the Dirichlet problem \eqref{m1}, there is not yet a convergence theory. The convergence result of this paper addresses this issue.

Let $\delta >0$. It is known, c.f. Theorem \ref{subdomain-cvg}, that the  Aleksandrov solution $u_m$ of \eqref{m1m} converges uniformly on compact subsets of $\Omega$ to the  Aleksandrov solution $u$ of \eqref{m1}. We choose $m$ such that $| u(x) - u_{m}(x)| < \delta/2$ for all $x \in \widetilde{\Omega}$.
By unicity of the Aleksandrov solution $u_m$ of \eqref{m1m}, we have $\tilde{u}=u_{m}$ in $\widetilde{\Omega}$. Thus on each compact subset of $\widetilde{\Omega}$, $|u_h - u_m| < \delta/2$ for $h$ sufficiently small. The solution $u$ of \eqref{m1} can then be approximated within a prescribed accuracy by first choosing $m$ and then $h$ sufficiently small. We emphasize that the solution $\tilde{u}$ of \eqref{m2} is not necessarily smooth.

It remains to chose the data to compute the local solution of \eqref{main-problem}. We may assume that $| f(x) - f_{m}(x)| < \delta$, $| \tilde{g}(x) - g_{m}(x)| < \delta$ and
since $u_m=g_m$ on $\partial \Omega$ and $u_m \in C(\tir{\Omega})$, we may choose $\widetilde{\Omega}$ such that $|u_m - g_m| < \delta$ on $\tir{\Omega} \setminus \widetilde{\Omega}$. Thus, from a practical point of view, for the implementation, we see that one can take $\widetilde{\Omega}=\Omega$, $f_{m}=f$ with $u_h=g$ on $\partial \Omega$. 
A similar situation arises in the routine use in the finite element literature of the approximation of a smooth domain by a polygonal domain. 
Numerical experiments with the discretization considered in this paper were given in \cite{Awanou-Std01,AwanouPseudo10} for both smooth and non smooth solutions. For that reason, they are not reproduced here. Another possibility, but with results of less accuracy, is to actually implement the method on a subdomain. This can be easily tested on a code for \eqref{m1} by extending $g$ to a larger domain $\hat{\Omega}$ and using the restriction of $g$ on $\partial \hat{\Omega}$ as boundary value. % for \eqref{main-problem}.
For the extension of the framework of this paper to the second boundary condition, only the choice of $f_m$ and $\widetilde{\Omega}$ is needed. We wish to address this in a separate work.

\subsection{Methodology} \label{methods}

The purpose of this section is to explain the need for regularization of the data and the need of a subdomain for our convergence result.  The methodology of this paper may be applied to other settings where one has numerical evidence of convergence for discretizations of \eqref{m1}. The general methodology consists in
\begin{enumerate}
\item[1-] Prove the convergence and local uniqueness of the solution of the discrete problem \eqref{main-problem} when \eqref{m1} has a smooth solution. See \cite{Awanou-Std01}. Under the assumption that the discrete problem \eqref{main-problem} has a solution which is piecewise strictly convex, prove local uniqueness using the continuity of the eigenvalues of a matrix as a function of its entries. See section \ref{uni-disc}.
\item[2-] Verify that the numerical method is robust enough to handle the standard tests for non smooth solutions. In \cite{Awanou-Std01,AwanouPseudo10}, we prove the convergence of iterative methods which preserve weakly convexity and their effectiveness in capturing a piecewise convex solution of \eqref{main-problem} was illustrated with numerical experiments. 
\item[3-] Choose $m$, $f_m, g_m$ and $\widetilde{\Omega}$ as specified in section \ref{relevance}.
\item[4-] Consider  a sequence of smooth uniformly convex domains $\Omega_s$ increasing to $\Omega$ \cite{Blocki97}, with the property that $\widetilde{\Omega} \subset \Omega_s$ for all $s$, and the problems with smooth solutions \cite{Trudinger08}
\begin{align} \label{m1-ms}
\begin{split}
\det D^2 u_{m s}  = f_{m} \, \text{in} \, \Omega_s, 
u_{m s}  = g_{m}  \, \text{on} \, \partial \Omega_s.
\end{split}
\end{align}
From %\cite[Proposition 2.4 ]{Savin13}, see also 
Theorem \ref{subdomain-cvg}, $u_{ms}$ converges uniformly on $\widetilde{\Omega}$ to $\tilde{u}$ as $s \to \infty$.
\item[5-] Establish that the discrete approximation $u_{ms,h}$ of the smooth function $u_{ms}$, on $\widetilde{\Omega}$ and with boundary data $u_{ms}$, converges uniformly to $u_{ms}$ on $\widetilde{\Omega}$ as $h \to 0$. This takes the form of an error estimate with constants depending on derivatives of $u_{ms}$.
\item[6-] %A localizing argument allows to prove that $u_{ms,h}$ converges uniformly to $u_{ms}$ on compact subsets at a rate which depends on the compact set but is independent of $m$ and $s$.
Because $\widetilde{\Omega}$ is an interior domain, interior Schauder estimates allow to get a uniform bound on the derivatives of $u_{ms}$. In other words, $u_{ms,h}$ converges uniformly to $u_{m s}$ on compact subsets of $\widetilde{\Omega}$ at a rate which depends on $\widetilde{\Omega}$  but is independent of $s$. 

\item[7-] The local equicontinuity of piecewise convex functions allows to take a subsequence in $s$. This gives a piecewise convex finite element function $u_h$ which solves the finite element problem \eqref{main-problem}. The approximation $u_h$ is shown to converge uniformly on compact subsets of $\widetilde{\Omega}$ to the solution $\tilde{u}$ of \eqref{m2}. Local uniqueness of the discrete solution is a consequence of the work done in Step 1. 
\end{enumerate}

\subsection{Possible disadvantages of the approach in this paper}
We prove that \eqref{main-problem} has a piecewise strictly convex solution which is locally unique. Even when \eqref{m1} has a smooth solution,  global uniqueness of the discrete approximation has not been addressed in previous work. In the standard finite difference context, a variational approach presented in \cite{Aw-Matamba} allows to select a special discrete solution. Numerical results reported therein indicate that such an approach is effective when the right hand side of the Monge-Amp\`ere equation is a sum of Dirac masses. The analysis in \cite{Aw-Matamba} uses heavily results on the existence of local solutions. 

The convergence result in the paper %is given for a wide class of Aleksandrov solutions, and for that reason it is not reasonable to expect a convergence rate. 
uses results available for the approximation of smooth solutions of \eqref{m1} using standard discretizations. See for example \cite{Awanou-Std01}. When \eqref{m1} has a smooth strictly convex solution, these results say that the discrete problem has a solution for $h \leq h_0$ where $h_0 \to 0$ as a high order Sobolev norm of $u$ approaches infinity. Thus for example when $||u||_{C^{k+1}(\Omega)}$ is very big, existence of a discrete solution would hold for $h$ close to machine precision. And this is just for smooth solutions. The interior Schauder estimates give a possibly large upper bound on $||u_{ms}||_{C^{k+1}(\widetilde{\Omega})}$ as the latter depends on the distance of $\widetilde{\Omega}$ to $\Omega$. Thus it is not possible to quantify how small $h$ should be for the existence of $u_{ms,h}$. We recall that $u_{ms}$ and $u_{ms,h}$ were introduced in step 5 of the methodology described in section \ref{methods}.
The situation is similar to the one with monotone schemes, as results for the numerical approximation of viscosity solutions for \eqref{m1} in the degenerate case $f \geq 0$ are stated in terms of uniform convergence on compact subsets with no quantification of how small $h$ can be.

\subsection{Relation with other work}

A convergence analysis for a discretization of \eqref{m1} starts with a choice of a notion of weak solution. For an analysis based on the notion of viscosity solution, we refer to \cite{Oberman2010a}
 in the finite difference context, and to \cite{Feng2012} in the finite element context for radial solutions with a biharmonic regularization. 
 The discretization proposed in \cite{Oberman2010a} is a monotone scheme and thus enjoys a discrete maximum principle. One of the advantages of a monotone scheme is that one can prove existence of a discrete solution with no restriction on the size of the mesh. Nethertheless, the reader should be aware that there are many non monotone schemes for problems given in the setting of viscosity solutions e.g. \cite{Guermond08}.
The lack of a maximum principle for the discretizations analyzed in this paper is related to the difficulty of proving stability of the discretization for smooth solutions without assuming a bound on a high order norm of the solution. For that reason, we introduced the theoretical computational domain $\widetilde{\Omega}$ and fix the parameter $m$ in the regularization of the data.

The weak solution in the viscosity sense is known to be equivalent to the weak solution in the sense of Aleksandrov for $f \in C(\tir{\Omega})$ and $f >0$ on $\Omega$. The arguments of this paper are based on the notion of Aleksandrov solution. To the best of our knowledge, a proven convergence result for the numerical resolution of \eqref{m1} via the notion of Aleksandrov solution was only considered in \cite{Oliker1988} for the two dimensional problem. The approach in \cite{Oliker1988} uses geometric arguments and is different from the one taken here.

When the weak smooth solution of \eqref{m1} is a smooth strictly convex function, B$\ddot{\text{o}}$hmer \cite{Bohmer2008} studied $C^1$ approximations and his method has been implemented in \cite{Davydov12}. See also \cite{Davydov10}. B$\ddot{\text{o}}$hmer's method requires a modification of the Argyris space and numerical results in \cite{Davydov12} used Newton's method and did not address some of the standard test cases for non smooth solutions. In \cite{Brix}, it is shown that with a standard $C^1$ approximation based on $B$-splines, Newton's method coupled with trust region methods is effective for these standard test cases. Newton's method was also used in \cite{Feng2009} in the vanishing moment methodology. See also \cite{Awanou2009a}. In \cite{Awanou-Std01}, we analyzed the discretization \eqref{main-problem} for both $C^0$ and $C^1$ approximations and gave numerical evidence of convergence for non smooth solutions if one uses Lagrange elements and a time marching method. We previously gave the corresponding numerical results with $C^1$ approximations in \cite{AwanouPseudo10}.
In \cite{Lakkis11b} it is shown that Newton's method is effective if one uses a mixed formulation and implement the resulting method in primal form. See \cite{Lakkis11} for a description of the method for linear non variational problems. 
However in all these works, i.e. \cite{Bohmer2008,Davydov12,Brix,Feng2009,Awanou2009a,Lakkis11b,Awanou-Std01}, no proof of convergence is given in the case the solution of \eqref{m1} is not in $H^2(\Omega)$.

In this paper, we present a theory which explains why standard discretizations of the type considered in this paper do converge for non smooth solutions of the Monge-Amp\`ere equation. The easiest way to get insight into the problem, is through the approach which consists in regularizing the exact solution \cite{MongeC1Alex}. 
The latter approach is less general in the sense that it does not apply to collocation type discretizations such as the standard finite difference method.  In fact, it is a standard technique in the analysis of Aleksandrov solutions of the Monge-Amp\`ere equation, e.g. \cite[Lemma 3.1]{Figalli13b}, to regularize the data $f$, $g$ and take a sequence of smooth uniformly convex domains approximating the given domain. It is then natural, following principles of compatible discretization, that a similar approach can be followed for a discretization. The results we present are more natural with spaces of  piecewise polynomials  $C^1$ functions. These can be constructed using Argyris elements, the spline element method \cite{AwanouPseudo10} or isogeometric analysis. 
However standard Lagrange elements are more popular. In that case the results follow naturally from the ones with $C^1$ functions, as we show in this paper. 
Thus the main part of the paper is devoted to $C^1$ approximations.

Regularization of the data has been used in \cite{GlowinskiICIAM07}.  If one assumes that the domain $\Omega$ is smooth and uniformly convex, we can take $\widetilde{\Omega}=\Omega$ and use global Schauder estimates c.f. \cite{Trudinger08}, and a bootstrapping argument, to implement the compactness argument described in section \ref{methods}. To address the practical issue of dealing with curved boundaries, one should use the approach in \cite{Brenner2010b} which consists in a penalization of the boundary condition and the use of curvilinear coordinates for elements near the boundary. The boundary condition can now be taken as $\tilde{u}=g_m$. The approach of this paper can be easily adapted to explain the numerical results with singular data presented in \cite{Brenner2010b}. Yet for another example, again for a smooth uniformly convex domain, one can also use isogeometric analysis as in \cite{Awanou-Iso} for $\widetilde{\Omega}=\Omega$.

Without loss of generality, in subsequent papers on the analysis of schemes for \eqref{m1}, one may assume that $f$ and $g$ are smooth. In fact, one can even also assume that the solution is smooth, as the techniques of this paper can be applied to handle the non smooth case.

\subsection{Organization of the paper}

We organize the paper as follows. In the next section, we introduce some notation, recall the main results on the convergence of the discretization \eqref{main-problem} when \eqref{m1} has a smooth solution and the notion of Aleksandrov solution of \eqref{m1}. In section \ref{exhaustion} we give preliminary results on smooth and polygonal exhaustions of the domain. In section \ref{cvg} we give the proof of existence of a convex solution of \eqref{main-problem} for $C^1$ approximations. 
In section \ref{cvg-C0} we prove the existence of a piecewise convex solution of \eqref{main-problem} for $C^0$ approximations. 
The proof of the convergence of the discretization is given in section \ref{no-reg}.
Section \ref{sec-rem} groups some important remarks.  In particular the strict piecewise convexity of the discrete solution, and hence local uniqueness, is given in Remark \ref{uni-disc}.
The proof of some technical results are given in section \ref{appendix}.

\section{Notation and preliminaries}

\subsection{General notation}

For two subsets $S$ and $T$ of $\R^d$, we use the usual notation $d(S,T)$ for the distance between them. Moreover, $\diam S$ denotes the diameter of $S$. 

We use the standard notation for the Sobolev spaces $W^{k,p}(\Omega)$ with norms $||.||_{k,p,\Omega}$ and semi-norm $|.|_{k,p,\Omega}$. In particular,
$H^k(\Omega)=W^{k,2}(\Omega)$ and in this case, the norm and semi-norms will be denoted respectively by 
$||.||_{k,\Omega}$ and $|.|_{k,\Omega}$. %The norm of the Lebesgue space $L^p(\Omega)$ is simply denoted $||.||_{L^p}$.
%For vector fields and matrix valued fields, the above notation is extended canonically.
%For a vector field $v=(v_i)_{i=1,\ldots,d}$ with values in $W^{k,p}(\Omega)^d$, we set $||v||_{k,p}= (\sum_{i=1}^d ||v_i||_{k,p}^2)^{\frac{1}{2}}$ and a similar notation for $|v|_{k,p}$. However for a matrix field $A=(A_{ij})_{i,j=1,\ldots,d}$, we define $||A||_{k,p}=\max_{i,j=1,\ldots,d} ||A_{ij}||_{k,p}$.
When there is no confusion about the domain $\Omega$, we will omit the subscript $\Omega$ in the notation of the norms and semi-norms. We recall that $H^1_0(\Omega)$ is the subspace of $H^1(\Omega)$ of elements with vanishing trace on $\partial \Omega$.

We make the usual convention of denoting  constants by $C$ but will occasionally index some constants. %For constants which depend on the mesh size, we may use $c(h)$ or $C(h)$. 
We assume 
% that the boundary of $\Omega$ is polygonal and
that the triangulation $\mathcal{T}_h(\Omega)$ of the domain $\Omega$ is shape regular in the sense that there is a constant $C>0$ such that for any element $K$, 
%\cite{} ``is contained in a ball of radius $h/C$ and contains a ball of radius $Ch$.''
$h_K/\rho_K \leq C$, where $h_K$ denotes the diameter of $K$ and $\rho_K$ the radius of the largest ball contained in $K$. We also require the triangulation to be quasi-uniform in the sense that $h/h_{min}$ is bounded where $h$ and $h_{min}$ are the maximum and minimum respectively of $\{h_K, K \in \mathcal{T}_h \}$.

\subsection{Finite dimensional subspaces} \label{finite-dim}

We will need the broken Sobolev norms and semi-norms
\begin{align*}
|| v ||_{t,p,h} & = \bigg( \sum_{K \in \mathcal{T}_h(\Omega) } || v ||_{t,p,K}^2 \bigg)^{\frac{1}{2}}, 1 < p < \infty \\
|| v ||_{t,\infty,h} & = \max_{K \in \mathcal{T}_h(\Omega) } || v ||_{t,\infty,K},
\end{align*}
with a similar notation for $| v |_{t,p,h}$, $|| v ||_{t,h}$ and $| v |_{t,h}$.

We let $V_h(\Omega)$ denote a finite dimensional space of piecewise polynomial $C^r(\Omega)$ functions, $r=0,1$, of local degree $k \geq 2$, i.e., $V_h$ is a subspace of 
\begin{equation*} %\label{sspaces}
\{v \in C^r(\Omega), \ v|_K \in \P_k, \ \forall K \in \mathcal{T}_h(\Omega) \},
\end{equation*}
where $\P_k$ denotes the space of polynomials of degree less than or equal to $k$. %Such spaces can be realized as finite element spaces \cite{Brenner02} or more generally as spline spaces \cite{Lai2007}. 
We make the assumption that the following approximation properties hold:
\begin{equation}
|| v -\Pi_h v ||_{t,p,h} \leq C_{ap} h^{l+1-t} |v|_{l+1,p}, \label{schum}
\end{equation}
where $\Pi_h$ is a projection operator mapping the Sobolev space $W^{l+1,p}(\Omega)$  into $V_h$, $1 \leq p \leq \infty$ and $0 \leq t \leq l \leq k$. %Unless the mesh has a special form, in general one needs $r \geq 5$ for $d=2$ and $r \geq 9$ for $d=3$. 
We require that the constant $C_{ap}$ does not depend on $h$ and $v$. %the domain $\Omega$ and is independent of $h$.
%\COMMENT{ is independent of the smallest angle $\theta_h$ in $\mathcal{T}$ because of the quasi-uniformity assumption?}
%It follows from \eqref{schum} that for $h \leq 1$,
%\begin{equation}
%||v||_{k,p} \leq (C_{ap} +1) ||v||_{l+1,p}, 0 \leq k \leq l \leq r. \label{schum2}
%\end{equation}
We also make the assumption that the following inverse inequality holds
\begin{equation}
||v||_{t,p,h} \leq C_{inv} h^{l-t+\text{min}(0,\frac{d}{p}-\frac{d}{q})} ||v||_{l,q,h}, \forall v \in V_h, \label{inverse}
\end{equation}
and for $0 \leq l \leq t, 1 \leq p,q\leq \infty$. We require that the constant $C_{inv}$ be independent of $h$ and $v$. The approximation property and inverse estimate assumptions are realized for standard  finite element spaces  \cite{Brenner02}.

%We will consider the restriction of the triangulation $\mathcal{T}_h(\Omega)$ to subdomains of $\Omega$ with polygonal boundary and will use without loss of generality the above norms notation for functions defined on these subdomains. %For $\Omega' \subset \Omega$ so that
%$$\Omega' = \sum_{K \in \mathcal{T}_h} K \cap \Omega',$$
%and for the Lagrange finite element spaces and certain finite dimensional spaces of $C^1$ functions, 
%the approximation property and inverse estimate assumptions also holds on $V_h(\Omega')$ by a standard scaling argument, using the mapping associated with the element $K \in \mathcal{T}_h$. For other spaces, one may make the assumption that the approximation property and inverse estimates assumptions hold on subdomains.

\subsection{Approximations of smooth solutions of the Monge-Amp\`ere equation} \label{sm-ma}

Next, we summarize the results of \cite{Awanou-Std01} of estimates for finite element approximations of smooth solutions of \eqref{m1}.

\begin{thm} \label{smooth-Omega-m}
Let $\Omega_s$ be a convex polygonal subdomain of $\Omega$ with a quasi-uniform triangulation $\mathcal{T}_h(\Omega_s)$. Assume that $u_s \in C^{\infty}(\tir{\Omega}_s)$ is a convex function which solves
\begin{equation*}
\det  D^2 u_s = f_s  \ \text{in} \, \Omega_s, \quad u_s=g_s \ \text{on} \, \partial \Omega_s, 
%\label{m1}
\end{equation*}
with $f_s, g_s  \in C^{\infty}(\tir{\Omega}_s)$ and $f_s \geq C >0$. % and  $g_m \in C(\partial \Omega_m)$. 
We consider the problem: find $u_{s,h} \in V_h(\Omega_s)$, $u_{s,h}=g_{s}$ on $\partial \Omega_s$ and
\begin{equation} \label{m1hm}
\sum_{K \in \mathcal{T}_h(\Omega_s)}\int_{K} (\det  D^2 u_{s,h} - f_s ) v_h \ud x=0, \forall v_h \in V_h(\Omega_s) \cap H_0^1(\Omega_s).
\end{equation}
%where $g_{s,h}=I_h(g_s)$  and %and we make the abuse of notation of denoting by $g_m$ a smooth extension of $g_m$. Here 
%$I_h$ denotes the Lagrange interpolation operator into $V_h(\Omega_s)$. 
Problem  \eqref{m1hm} has a (locally unique) piecewise convex solution $u_{s,h}$ with
$$
||u_s-u_{s,h}||_{2,h,\Omega_s} \leq C_s h^{l-1}, 2 \leq l \leq k,
$$
and the constant $C_s$ is uniformly bounded if $||u_s||_{l+1,\infty,\Omega_s}$ is uniformly bounded.
\end{thm}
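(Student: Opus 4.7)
The plan is to use a Newton--Kantorovich / Banach fixed point argument centered at the interpolant $\Pi_h u_s$. Define the discrete nonlinear operator $F_h \colon V_h(\Omega_s) \to V_h(\Omega_s)^*$ by $\langle F_h(w), v_h \rangle = \sum_{K \in \mathcal{T}_h(\Omega_s)} \int_K (\det D^2 w - f_s)\, v_h \ud x$, so that \eqref{m1hm} is equivalent to locating a zero of $F_h$ on the affine subspace $\{w \in V_h(\Omega_s) : w = g_s \text{ on } \partial \Omega_s\}$ when tested against $V_h(\Omega_s) \cap H_0^1(\Omega_s)$. The Fr\'echet derivative at a piecewise $C^2$ element $w$ is $\langle D F_h(w)\, z, v_h \rangle = \sum_K \int_K \bigl(\cof D^2 w : D^2 z\bigr)\, v_h \ud x$, a linear non-divergence-form elliptic bilinear form whenever $\cof D^2 w$ is positive definite.

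First I would check that $D F_h(\Pi_h u_s)$ is uniformly elliptic. Since $u_s \in C^\infty(\tir{\Omega}_s)$ is strictly convex, $\cof D^2 u_s$ is uniformly positive definite on $\tir{\Omega}_s$. Combining the approximation property \eqref{schum} with $l=k$ and the inverse inequality \eqref{inverse} applied to $\Pi_h u_s - u_s$ in a suitable way gives $\|\Pi_h u_s - u_s\|_{2,\infty,h} \to 0$, so that $\cof D^2 \Pi_h u_s$ remains uniformly positive definite on each element for $h$ sufficiently small, with threshold depending on $\|u_s\|_{l+1,\infty,\Omega_s}$. The crucial next step is to establish $h$-independent invertibility of $D F_h(\Pi_h u_s)$ on $V_h(\Omega_s)\cap H_0^1(\Omega_s)$, i.e.\ a discrete $H^2$-stability estimate for this variable-coefficient non-divergence operator, typically via an Aubin--Nitsche style duality argument combined with discrete elliptic regularity.

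Next, a consistency estimate. Since $\det D^2 u_s = f_s$, one has $F_h(u_s) = 0$, and a Taylor expansion of the $\det$ function together with \eqref{schum} yields $\|F_h(\Pi_h u_s)\|_{V_h^*} \leq C\, h^{l-1}$, where $C$ depends polynomially on $\|u_s\|_{l+1,\infty,\Omega_s}$ through the first and second derivatives of the determinant. Define the Newton map $T_h(w) = w - D F_h(\Pi_h u_s)^{-1} F_h(w)$. Using \eqref{inverse} to convert broken $H^2$-smallness of $w - \Pi_h u_s$ into $L^\infty$-smallness of $D^2(w-\Pi_h u_s)$, the quadratic Taylor remainder of $\det$ is controlled, and one verifies that $T_h$ maps the broken-$H^2$ ball of radius $C_s h^{l-1}$ around $\Pi_h u_s$ into itself and acts as a contraction there. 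The Banach fixed point theorem yields a (locally unique) solution $u_{s,h}$ of \eqref{m1hm} satisfying $\|u_{s,h} - \Pi_h u_s\|_{2,h,\Omega_s} \leq C_s h^{l-1}$, and the triangle inequality with \eqref{schum} gives the claimed estimate $\|u_s - u_{s,h}\|_{2,h,\Omega_s} \leq C_s h^{l-1}$, with $C_s$ uniformly bounded when $\|u_s\|_{l+1,\infty,\Omega_s}$ is.

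The main obstacle is the uniform invertibility of $D F_h(\Pi_h u_s)$: this is a discrete analogue of $H^2$ regularity for a non-divergence form operator with merely piecewise smooth (and possibly inter-element discontinuous) coefficients, and the constant in that bound must not degenerate in $h$. A secondary difficulty is tracking how the final constant $C_s$ depends on $\|u_s\|_{l+1,\infty,\Omega_s}$, since it accumulates contributions from the consistency error, from the norm of $D F_h(\Pi_h u_s)^{-1}$ (through the lower bound on the eigenvalues of $\cof D^2 u_s$), and from the second derivatives of $\det$ evaluated near $D^2 u_s$. Finally, piecewise strict convexity of $u_{s,h}$ is automatic in the end: inside each element $K$, the estimate $\|D^2 u_{s,h} - D^2 u_s\|_{L^\infty(K)} \to 0$ follows from the inverse inequality applied to $u_{s,h} - \Pi_h u_s$ combined with \eqref{schum} for $\Pi_h u_s$, and $D^2 u_s$ is uniformly positive definite on $\tir{\Omega}_s$.
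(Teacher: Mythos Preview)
The paper does not actually prove this theorem: it is presented as a summary of results established elsewhere, namely in \cite{Awanou-Std01} for the general $C^0$/$C^1$ setting, and the paper remarks that for $C^1$ approximations the statement follows from \cite[Theorems 5.1 and 8.7]{Bohmer2008} together with an inverse estimate. So there is no in-paper proof to compare against.

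That said, your Newton--Kantorovich / Banach fixed point outline is precisely the machinery those references employ: B\"ohmer's abstract framework is exactly a linearize-at-the-interpolant, prove stability of the discrete linearized operator, then contract in a small ball argument, and \cite{Awanou-Std01} proceeds by an analogous fixed point construction (the paper itself alludes to ``the fixed point argument of \cite{Awanou-Std01}'' in section~\ref{uni-disc}). Your identification of the main obstacle---the $h$-uniform invertibility of $DF_h(\Pi_h u_s)$, i.e.\ a discrete $H^2$ stability estimate for a non-divergence operator with piecewise coefficients---is exactly the nontrivial ingredient supplied by those references, and your tracking of how $C_s$ depends on $\|u_s\|_{l+1,\infty,\Omega_s}$ matches the paper's emphasis on that dependence (see the discussion in section~\ref{methods}, step~5, and the disadvantages subsection). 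In short, your sketch is correct and aligned with the cited proofs; there is simply nothing further in the present paper to compare it to.
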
 

%\begin{proof} \end{proof}

For $C^1$ approximations, the result of Theorem \ref{smooth-Omega-m} follows from \cite[Theorems 5.1 and 8.7.]{Bohmer2008} and an inverse estimate. Equation \eqref{m1hm} differs from
\eqref{main-problem} in the sense that we assume here that $u_s$ is smooth whereas the solution $\tilde{u}$ of \eqref{m2} is not necessarily smooth.

\begin{cor} \label{unif-cvg}

Under the assumptions (and notation) of Theorem \ref{smooth-Omega-m}, the approximate solution $u_{s,h}$ converges uniformly on compact subsets of $\Omega_s$ to $u_s$ as $h \to 0$.

\end{cor}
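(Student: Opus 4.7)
The plan is to convert the broken $H^2$ error estimate from Theorem \ref{smooth-Omega-m} into a uniform error bound by inserting a finite element interpolant and applying the inverse inequality (\ref{inverse}). Let $\Pi_h u_s \in V_h(\Omega_s)$ denote the projection of $u_s$ provided by the approximation property (\ref{schum}), and decompose
\begin{equation*}
\|u_s - u_{s,h}\|_{0,\infty,h} \;\leq\; \|u_s - \Pi_h u_s\|_{0,\infty,h} \;+\; \|\Pi_h u_s - u_{s,h}\|_{0,\infty,h}.
\end{equation*}
The first term is controlled directly by (\ref{schum}) with $t=0$, $p=\infty$, giving a bound of order $h^{l+1}|u_s|_{l+1,\infty,\Omega_s}$, which is finite because $u_s \in C^\infty(\tir{\Omega}_s)$.

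For the second term, the difference $\Pi_h u_s - u_{s,h}$ lies in $V_h(\Omega_s)$, so the inverse inequality (\ref{inverse}) with $t=0$, $p=\infty$, $l=2$, $q=2$ yields
\begin{equation*}
\|\Pi_h u_s - u_{s,h}\|_{0,\infty,h} \;\leq\; C_{inv}\, h^{2-d/2}\, \|\Pi_h u_s - u_{s,h}\|_{2,h,\Omega_s}.
\end{equation*}
A triangle inequality then bounds the right-hand broken $H^2$ norm by $\|u_s - \Pi_h u_s\|_{2,h,\Omega_s} + \|u_s - u_{s,h}\|_{2,h,\Omega_s}$, both of which are $O(h^{l-1})$ by (\ref{schum}) and by Theorem \ref{smooth-Omega-m}, respectively. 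Combining, the second term is of order $h^{l+1-d/2}$, and since $l \geq 2$ and $d \in \{2,3\}$, this exponent is strictly positive.

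Adding the two contributions gives $\|u_s - u_{s,h}\|_{0,\infty,h} = O(h^{l+1-d/2}) \to 0$ as $h \to 0$, which delivers uniform convergence on $\Omega_s$ and in particular on any compact subset. There is no significant obstacle; the only point requiring a moment's care is the verification that the exponent $2 - d/2$ coming from the scaling between $L^\infty$ and $L^2$ on each element is strictly less than the exponent $l-1$ gained from the error estimate, which is automatic for $l \geq 2$ and $d \leq 3$. The statement is phrased in terms of compact subsets rather than all of $\tir{\Omega}_s$ merely because $u_{s,h}$ need only agree with $g_s$ at Lagrange points of $\partial \Omega_s$, but the argument in fact gives uniform convergence on the entire closed subdomain.
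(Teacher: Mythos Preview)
Your decomposition via the interpolant is natural, but the central step misapplies the inverse inequality. In the paper's statement of \eqref{inverse} the indices are constrained by $0 \leq l \leq t$: the estimate trades a \emph{higher} broken norm for a \emph{lower} one at the cost of a nonpositive power $h^{l-t}$. Your choice $t=0$, $l=2$ reverses this, and the bound you write,
\[
\|\Pi_h u_s - u_{s,h}\|_{0,\infty,h}\;\leq\;C_{inv}\,h^{2-d/2}\,\|\Pi_h u_s - u_{s,h}\|_{2,h},
\]
is false for discrete functions. A quick counterexample: let $v$ equal $1$ on a single element and $0$ elsewhere; the left side is $1$ while $\|v\|_{2,h}=\|v\|_{0,2}\sim h^{d/2}$, so the right side is $O(h^{2})\to 0$. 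The legitimate inverse estimate from $L^2$ to $L^\infty$ ($t=l=0$, $p=\infty$, $q=2$) gives only $h^{-d/2}$, and combined with the sole available error bound $\|\Pi_h u_s-u_{s,h}\|_{2,h}=O(h^{l-1})$ you obtain $O(h^{\,l-1-d/2})$, which does not converge for $l=k=2$ in $d=2,3$.

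The paper takes a different route: it forgoes the interpolant split and invokes the elementwise Sobolev embedding $H^2(K)\hookrightarrow L^\infty(K)$ (valid for $d\leq 3$) directly on $u_s-u_{s,h}$, writing $\|u_s-u_{s,h}\|_{0,\infty,K}\leq \|u_s-u_{s,h}\|_{2,K}$ and then taking the maximum over $K$. One may remark that this embedding constant also carries an $h$-dependence under scaling, so the paper is itself terse here; but in any case the inequality you invoke is not the inverse estimate \eqref{inverse}, and the exponent $l+1-d/2$ you arrive at is not justified by the tools cited.
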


\begin{proof}
For each element $K \in \mathcal{T}_h(\Omega_s)$, by the embedding of $H^2(K)$ into $L^{\infty}(K)$, we obtain
\begin{align*} %\label{sm-est}
||u_s-u_{s,h}||_{0,\infty,K} \leq ||u_s-u_{s,h}||_{2,K} \leq C_s h^{l-1}||u_s||_{l+1,\infty,\Omega_s}.
\end{align*}
Therefore
$$
||u_s-u_{s,h}||_{0,\infty,\Omega_s} \leq  C_s h^{l-1}||u_s||_{l+1,\infty,\Omega_s},
$$
and the result follows.
\end{proof}

%The constant $C_s$ in the proof of the above corollary is a constant multiple $||u_s||_{k+1,\infty,\Omega_s}$. 

\subsection{Interior Schauder estimates}

We will need estimates which depend on derivatives away from $\partial \Omega_s$ as we assume that $\Omega$ is a polygonal domain. This is the main reason for introducing the theoretical computational domain $\widetilde{\Omega}$. We will make the assumption that
$$
\tilde{\Omega} \subset \Omega_s, \ \text{for all} \ s,
$$
and thus the closure of $\widetilde{\Omega}$ is a compact subset of $\Omega$.

The proof of the following lemma is given in section \ref{appendix}.

\begin{lemma} \label{lem:int-Schauder}
We have the uniform interior Schauder estimates
$$
||u_{m s}||_{C^{k+1}(\widetilde{\Omega})} \leq C_{m},
$$
where $C_{m}$ depends only on $m, d, c_2, ||f_{m}||_{C^{k}(\tir{\Omega})},  \widetilde{\Omega}$ and $d(\widetilde{\Omega}, \partial \Omega)$.
\end{lemma}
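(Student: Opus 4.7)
The strategy is classical: first obtain a uniform (in $s$) $L^{\infty}$ bound on $u_{ms}$, then apply Caffarelli's interior $C^{2,\alpha}$ regularity theorem for convex Aleksandrov solutions to get a uniform $C^{2,\alpha}$ bound on a subdomain containing $\widetilde{\Omega}$, and finally bootstrap using interior Schauder theory for the differentiated Monge-Amp\`ere equation, which is linear and uniformly elliptic once $D^2 u_{ms}$ is two-sidedly controlled.

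\textbf{Step 1.} For the $L^{\infty}$ bound, since $u_{ms}$ is convex on $\Omega_s$ with boundary values $g_m$, convexity gives $u_{ms} \leq \sup_{\tir{\Omega}} g_m$. For the lower bound, fix a ball $B_R(x_0) \supset \Omega$ and set $w(x) = \tfrac{1}{2} c_3^{1/d}(|x-x_0|^2 - R^2) + \inf_{\tir{\Omega}} g_m$. Then $\det D^2 w = c_3 \geq f_m$ in $\Omega_s$ and $w \leq g_m$ on $\partial \Omega_s$, so the Aleksandrov comparison principle yields $w \leq u_{ms}$ on $\tir{\Omega}_s$. Both bounds are independent of $s$.

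\textbf{Step 2.} Choose $\Omega' \Subset \Omega$ with $\widetilde{\Omega} \Subset \Omega'$ and $\Omega' \subset \Omega_s$ for all $s$. Since $c_2 \leq f_m \leq c_3$ with $f_m$ smooth, Caffarelli's interior $C^{2,\alpha}$ regularity theorem for convex Aleksandrov solutions of $\det D^2 u = f$ yields, for some $\alpha \in (0,1)$,
\[
\|u_{ms}\|_{C^{2,\alpha}(\Omega')} \leq C\bigl(d, c_2, \|u_{ms}\|_{L^{\infty}(\Omega_s)}, \|f_m\|_{C^{\alpha}(\tir{\Omega})}, d(\Omega', \partial \Omega)\bigr),
\]
where we used $d(\Omega', \partial \Omega_s) \geq d(\Omega', \partial \Omega)$. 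By Step 1 this bound is independent of $s$.

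\textbf{Step 3.} The $C^{2,\alpha}$ bound controls the eigenvalues of $D^2 u_{ms}$ from above on $\Omega'$, and $\det D^2 u_{ms} = f_m \geq c_2 > 0$ then forces each eigenvalue to be bounded below by a positive constant. Hence $(D^2 u_{ms})^{-1}$ has $C^{0,\alpha}$ entries and is uniformly positive definite. Differentiating $\log\det D^2 u_{ms} = \log f_m$ in an arbitrary direction $e$ yields the linear uniformly elliptic equation
\[
(D^2 u_{ms})^{-1}_{ij}\,\partial_i\partial_j(\partial_e u_{ms}) = \partial_e \log f_m,
\]
with $C^{0,\alpha}$ coefficients and smooth right-hand side. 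Classical interior Schauder estimates on a slightly smaller subdomain give a uniform $C^{3,\alpha}$ bound. Iterating, shrinking the subdomain at each step and ending at $\widetilde{\Omega}$, after $k-1$ steps one reaches a uniform $C^{k+1,\alpha}(\widetilde{\Omega})$ bound, which implies the stated $C^{k+1}(\widetilde{\Omega})$ bound with constant $C_m$ of the claimed form. The only delicate point is the invocation of Caffarelli's interior $C^{2,\alpha}$ theorem, whose dependence on the geometry is purely through the distance to the boundary of the ambient domain; once this is in hand, the $s$-dependence drops out and the rest is standard bootstrap.
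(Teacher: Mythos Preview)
Your overall architecture --- uniform $L^\infty$ bound, then an interior $C^{2,\alpha}$ estimate, then the linear bootstrap by differentiating the equation --- is exactly the paper's strategy, and your Step~3 coincides with the paper's argument essentially verbatim. The paper also explicitly adds a Step~1 ingredient you do not mention: in the homogeneous case $g_m=0$ it accepts that Caffarelli's real Monge--Amp\`ere interior estimate gives the $C^{2}$ bound with the right dependence.

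Where the two proofs diverge is precisely at your Step~2 in the \emph{non-homogeneous} case $g_m\neq 0$. You invoke Caffarelli's interior $C^{2,\alpha}$ theorem and assert that the resulting constant depends on the geometry only through $d(\Omega',\partial\Omega_s)\geq d(\Omega',\partial\Omega)$ and the $L^\infty$ bound on $u_{ms}$. The paper does not accept this as immediate: its author writes that, with nonzero boundary data, ``it seems that the only genuine interior Schauder estimates \ldots\ with constant depending only on the diameter of the compact subset $K\subset\Omega_s$ and not on $\Omega_s$'' come from the complex Monge--Amp\`ere theory, and accordingly obtains the uniform $C^2$ bound by passing to the complex equation and quoting the interior $C^{2,\alpha}$ estimate of Dinew--Zhang--Zhang. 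The point is that Caffarelli's real estimate, in its standard form, is stated relative to \emph{sections} of the solution; turning this into a bound in terms of Euclidean distance requires a quantitative strict-convexity estimate, and with nonzero $g_m|_{\partial\Omega_s}$ that modulus could in principle depend on $s$. You flag this as ``the only delicate point'' but then simply assert the desired dependence; the paper treats this as a genuine obstacle and bypasses it via the complex route. If you want to keep your Step~2 as written, you owe an argument that the sections of $u_{ms}$ through points of $\Omega'$ are compactly contained in $\Omega_s$ with size controlled independently of $s$; otherwise, the complex Monge--Amp\`ere detour is the paper's way of closing that gap.
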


\subsection{The Aleksandrov solution} \label{def-alex}

In this part of the section, we recall the notion  of Aleksandrov solution of \eqref{m1} and state several results that will be needed in our analysis. We follow the presentation in  \cite{Guti'errez2001} to which we refer for further details. 

Let $\Omega$ be an open subset of $\R^d$. Given a real valued convex function $v$ defined on $\Omega$, the normal mapping of $v$, or subdifferential of $v$, is a set-valued mapping $N_v$ from $\Omega$ to the set of subsets of $\R^d$ such that for any $x_0 \in \Omega$,
\begin{align*}
 N_v (x_0) = \{ \, q \in \R^d: v(x) \geq v(x_0) + q \cdot (x-x_0), \, \text{for all} \, x \in \Omega\,\}.
\end{align*}
Given $E \subset \Omega$, we define $N_v(E) = \cup_{x \in E} N_v(x)$ and denote by $|E|$ the Lebesgue measure of $E$ when the latter is measurable. 

If $v$ is a convex continuous function on $\Omega$, the class
 \begin{align*}
\mathcal{S} = \{\, E \subset \Omega, N_v(E) \, \text{is Lebesgue measurable}\, \},
\end{align*}
is a Borel $\sigma$-algebra and the set function $M[v]: \mathcal{S}  \to \tir{\R}$ defined by
$$
M[v] (E) = |N_v(E)|,
$$
is a measure, finite on compact sets, called the Monge-Amp\`ere measure associated with the function $v$. 

We are now in a position to define generalized solutions of the Monge-Amp\`ere equation. Let the domain $\Omega$ be open and convex. Given a Borel measure $\mu$ on $\Omega$, a convex function $v \in C(\Omega)$, is an Aleksandrov solution of
$$
\det D^2 v = \mu,
$$
if the associated Monge-Amp\`ere measure $M[v]$ is equal to $\mu$. If $\mu$ is absolutely continuous with respect to the Lebesgue measure and with density $f$, i.e. 
$$
\mu (B) = \int_B f \ud x, \, \text{for any Borel set} \, B,
$$
we identify $\mu$ with $f$. 
We have
\begin{thm}  [\cite{Hartenstine2006} Theorem 1.1] \label{ex-Alex}
Let $\Omega$ be a bounded convex domain of $\R^d$. Assume $f \in L^1(\Omega)$ and $g \in C(\partial \Omega)$ can be extended to a function $\tilde{g} \in C(\tir{\Omega})$ which is  convex in $\Omega$. Then the Monge-Amp\`ere equation \eqref{m1}
has a unique convex Aleksandrov solution in $C(\tir{\Omega})$.
\end{thm}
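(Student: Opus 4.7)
The plan is to establish uniqueness via a comparison principle for Monge-Amp\`ere measures and existence via a double approximation argument, regularizing both the data and the domain so that classical theory applies, and then passing to the limit using compactness properties of convex functions together with weak continuity of the Monge-Amp\`ere operator.

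\emph{Uniqueness.} First I would prove the following comparison principle: if $u,v \in C(\tir{\Omega})$ are convex with $u \geq v$ on $\partial \Omega$ and $M[u] \leq M[v]$ as Borel measures on $\Omega$, then $u \geq v$ throughout $\Omega$. The standard argument assumes $\{v-u > 0\}$ is nonempty, replaces $u$ by $u + \ell$ for a small affine perturbation $\ell$ so that the open set $E=\{v-u-\ell>0\}$ is compactly contained in $\Omega$, and then establishes the inclusion $N_v(E) \subset N_u(E)$. Taking Lebesgue measure and using $M[u] \leq M[v]$ yields a contradiction unless $E$ is empty. Applied to two Aleksandrov solutions of \eqref{m1} with the same data, the principle gives equality in both directions, hence uniqueness.

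\emph{Existence.} Choose $f_m \in C^\infty(\tir{\Omega})$ with $c_m \leq f_m$ strictly positive and $f_m \to f$ in $L^1(\Omega)$, $g_m \in C^\infty(\tir{\Omega})$ converging uniformly to $\tilde g$, and smooth uniformly convex domains $\Omega_s$ exhausting $\Omega$. For each fixed pair $(m,s)$, the classical Caffarelli-Nirenberg-Spruck theory delivers a unique strictly convex $u_{m,s} \in C^\infty(\tir{\Omega}_s)$ solving $\det D^2 u_{m,s}=f_m$ with $u_{m,s}=g_m$ on $\partial \Omega_s$. Convexity and the boundary values yield $u_{m,s}\leq\max_{\partial \Omega_s}g_m$, while comparison with a single quadratic barrier $w(x)=A|x-x_0|^2+B$, with $A$ so large that $(2A)^d \geq \sup_{m} \|f_m\|_\infty$ and $B$ chosen so that $w \leq g_m$ on $\partial \Omega_s$, furnishes a uniform lower bound. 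Uniformly bounded convex functions are locally equicontinuous on $\Omega$, so a diagonal subsequence converges locally uniformly in $\Omega$ to a convex $u_m \in C(\Omega)$. Weak continuity of the Monge-Amp\`ere measure under local uniform convergence of convex functions gives $M[u_m]=f_m\,dx$. A second application of the same compactness and weak-continuity principles as $m\to\infty$ produces a convex $u\in C(\Omega)$ with $M[u]=f\,dx$, and uniqueness then forces the full sequences to converge.

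\emph{The main obstacle.} The delicate part is showing that $u$ extends continuously to $\tir{\Omega}$ with $u|_{\partial \Omega}=g$, because $\Omega$ is merely convex (not smooth or strictly convex) and $g$ is merely continuous. This requires, at each $x_0 \in \partial \Omega$, the construction of a convex local barrier $w_{x_0}$ with $\det D^2 w_{x_0}\geq \sup_m f_m$ (as a measure), $w_{x_0}(x_0)=g(x_0)$, and $w_{x_0}\leq g$ on $\partial \Omega$; combined with the upper barrier obtained from supporting affine functions of the convex extension $\tilde g$, these pinch $u_{m,s}(x)\to g(x_0)$ as $x\to x_0$ uniformly in $m,s$. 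The hypothesis that $g$ admits a convex extension $\tilde g$ is precisely what is needed to build the upper barriers, while convexity of $\Omega$ provides a supporting hyperplane at $x_0$ allowing a downward paraboloid translated along its normal direction to serve as the lower barrier. Once boundary continuity is secured uniformly in $(m,s)$, the limit $u$ attains $g$ continuously and the proof is complete.
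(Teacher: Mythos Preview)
The paper does not supply a proof of this statement; it is quoted as Theorem~1.1 of \cite{Hartenstine2006} and used as background. Your overall architecture --- comparison principle for uniqueness, regularization of $f$ and $g$, exhaustion of $\Omega$ by smooth uniformly convex subdomains, classical solvability on those, compactness via local equicontinuity of bounded convex families, and weak continuity of $M[\cdot]$ --- is sound and is in fact close in spirit both to Hartenstine's argument and to the approximation machinery the present paper itself exploits in Sections~\ref{exhaustion}--\ref{cvg}.

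The one place that needs repair is the barrier discussion in your last paragraph. Supporting affine functions $\ell$ of $\tilde g$ at $x_0\in\partial\Omega$ satisfy $\ell\leq\tilde g=g$ on $\partial\Omega$ and $M[\ell]=0\leq M[u]$, so the comparison principle gives $\ell\leq u$ in $\Omega$; these are therefore \emph{lower} barriers, yielding $\liminf_{x\to x_0}u(x)\geq g(x_0)$. This is the hard direction, and it is precisely here that the convex-extension hypothesis on $g$ is used --- you have the right tool, just the wrong label. Your paraboloid, by contrast, does not do what you claim on a merely convex domain: if $\partial\Omega$ contains a flat face through $x_0$, the supporting hyperplane of $\Omega$ contains that entire face, and there is no convex quadratic $w$ with $w(x_0)=g(x_0)$, $w\leq g$ along the face, and $\det D^2 w\geq\sup_m\|f_m\|_\infty$, unless $g$ itself is suitably convex along the face --- which again is exactly what $\tilde g$ already provides. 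The upper bound $\limsup_{x\to x_0}u(x)\leq g(x_0)$ requires no barrier at all: it follows directly from convexity of $u$ (its restriction to any chord lies below the secant through the boundary values) together with continuity of $g$. Relabel the affine supports as lower barriers, drop the paraboloid, add this one-line convexity argument for the upper bound, and your proof is complete.
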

%The general case of locally integrable right hand side $f$ is addressed in section \ref{loc}.

%\begin{cor}Let $\Omega$ be a bounded convex domain of $\R^d$. Assume $f \in C(\tir{\Omega})$ and $g \in C(\partial \Omega)$. Then the Monge-Amp\`ere equation \eqref{m1} has a unique convex Aleksandrov solution in $C(\tir{\Omega})$. \end{cor}
%\begin{proof}
%The result follows from the equivalence of viscosity and Aleksandrov solution when $f >0$, \cite{Guti'errez2001} Proposition 1.7.1. In the general case, we note that \eqref{m1} has a unique convex viscosity solution \cite{Ishii1990}. Thus $g$ extends to a continuous function on $\Omega$, namely the viscosity solution, and we can apply Theorem \ref{ex-Alex}.
%\end{proof}
%The proof of the above corollary is given in \cite{MongeC1Alex}.

\begin{rem}
The assumption that  $g \in C(\partial \Omega)$ can be extended to a convex function $\tilde{g} \in C(\tir{\Omega})$ can be removed if the domain $\Omega$ is uniformly convex, \cite{Guti'errez2001}.
\end{rem} 

We recall that for a convex function $v$ in $C^2(\Omega)$, the Monge-Amp\`ere measure $M[v]$ associated with $v$ is given by
$$
M[v] (E) = \int_E \det D^2 v(x) \ud x,
$$
for all Borel sets $E \subset \Omega$. %Finally, we have a weak convergence result for Monge-Amp\`ere measures.
%\begin{lemma} \cite{Guti'errez2001} Lemma 1.2.3. Let $u_k$ be a sequence of convex functions in $\Omega$ such that $u_k \to u$ uniformly on compact subsets of $\Omega$. Then the associated Monge-Amp\`ere measures $M u_k$ converge weakly to $M u$, i.e
%$$\int_{\Omega} w(x) d M u_k(x) \to \int_{\Omega} w(x) d M u(x),$$
%for every continuous function $w$ with compact support in $\Omega$.
%\end{lemma}

\begin{defn} A sequence $\mu_m$ of Borel measures is said to converge weakly to a Borel measure $\mu$ if and only if
$$
\int_{\Omega} p(x)\ud \mu_m  \to \int_{\Omega} p(x) \ud \mu,
$$
for every continuous function $p$ with compact support in $\Omega$.
\end{defn}
For the special case of absolutely continuous measures $\mu_m$ with density $a_m$ with respect to the Lebesgue measure, we have
\begin{defn}
Let $a_m, a \geq 0$ be given functions. We say that $a_m$ converges weakly to  $a$ as measures if and only if 
$$\int_{\Omega }a_m p \ud x \to \int_{\Omega } a p \ud x, $$ 
for all continuous functions $p$ with compact support in $\Omega$. 
\end{defn}
We have the following weak continuity result of Monge-Amp\`ere measures with respect to local uniform convergence.

\begin{lemma}[Lemma 1.2.3 \cite{Guti'errez2001}] \label{weak-s} Let $u_m$ be a sequence of convex functions in $\Omega$ such that $u_m \to u$ uniformly on compact subsets of $\Omega$. Then the associated Monge-Amp\`ere measures $M [u_m]$ tend to $M[u]$ weakly. %, that is
%$$\int_{\Omega } p(x) d M u_m(x) \ud x \to \int_{\Omega } p(x) d M u(x) \ud x, $$ 
%for all continuous functions $p$ with compact support in $\Omega$. 

\end{lemma}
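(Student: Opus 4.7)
The plan is to deduce weak convergence from the pair of Portmanteau-type inequalities
$$\limsup_{m \to \infty} M[u_m](K) \leq M[u](K) \quad \text{for every compact } K \subset \Omega,$$
$$\liminf_{m \to \infty} M[u_m](U) \geq M[u](U) \quad \text{for every open } U \Subset \Omega,$$
which together characterize vague convergence of Radon measures and therefore weak convergence against continuous compactly supported test functions. A prerequisite is the local uniform boundedness of $M[u_m]$: for any compact $K \subset \Omega$ I fix an open $V$ with $K \Subset V \Subset \Omega$, note that uniform convergence of $u_m$ on $\overline{V}$ forces the sequence to be uniformly bounded there, and then invoke the standard convexity-based local Lipschitz estimate to conclude that $N_{u_m}(K)$ is contained in a fixed Euclidean ball, so $M[u_m](K)$ is bounded independently of $m$.

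For the upper bound on compacts I would establish the Kuratowski-style inclusion
$$\limsup_{m \to \infty} N_{u_m}(K) \subset N_u(K).$$
If $p_{m_j} \in N_{u_{m_j}}(x_{m_j})$ with $x_{m_j} \in K$, $x_{m_j} \to x$, and $p_{m_j} \to p$, then passing to the limit in $u_{m_j}(y) \geq u_{m_j}(x_{m_j}) + p_{m_j}\cdot(y - x_{m_j})$ for each fixed $y \in \Omega$ is legitimate by uniform convergence on compacta and continuity of the limit $u$, yielding $p \in N_u(x) \subset N_u(K)$. The Fatou-for-sets inequality $|\limsup_m A_m| \geq \limsup_m |A_m|$, applicable because the $N_{u_m}(K)$ all sit in the common bounded set produced above, then gives $M[u](K) \geq \limsup_m M[u_m](K)$.

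The lower bound on opens is the delicate step. The key convex-analytic fact is that the set of \emph{exposed} supporting normals to $u$ over $U$ — those $p$ for which $y \mapsto u(y) - p \cdot y$ attains a strict global minimum on $\Omega$ at a unique point $x_0 \in U$ — exhausts $N_u(U)$ up to a Lebesgue null set, since non-exposed $p$ correspond to affinity of $u$ on a non-trivial segment and the collection of such $p$ forms a null set. For any exposed pair $(p, x_0)$ with $x_0 \in U$, the functions $h_m(y) := u_m(y) - p \cdot y$ converge uniformly on $\overline{U}$ to $h(y) := u(y) - p \cdot y$, which attains a strict minimum at $x_0$ lying strictly below $\min_{\partial U} h$. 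For $m$ large the minimizer of $h_m$ on $\overline{U}$ is therefore an interior point $x_m \in U$, which gives $p \in N_{u_m}(x_m) \subset N_{u_m}(U)$. Hence $N_u(U)$ lies in $\liminf_m N_{u_m}(U)$ modulo a null set, and Fatou's lemma for sets yields $M[u](U) \leq \liminf_m M[u_m](U)$.

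The principal obstacle is this last step: one must invoke the density of exposed normals — a non-trivial but classical fact from convex analysis — and arrange the barrier carefully so that the perturbed minimizers of $h_m$ remain interior to $U$ uniformly in $m$. The remaining ingredients are routine consequences of uniform convergence on compacta and the local Lipschitz regularity of convex functions.
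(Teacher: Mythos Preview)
The paper does not supply its own proof of this lemma; it is quoted verbatim as Lemma~1.2.3 of Guti\'errez's monograph and used as a black box. Your outline is correct and is in fact precisely the argument Guti\'errez gives: establish the two Portmanteau inequalities via the set-theoretic inclusions $\limsup_m N_{u_m}(K)\subset N_u(K)$ and $N_u(U)\subset\liminf_m N_{u_m}(U)$ modulo a Lebesgue-null set, the latter relying on the Aleksandrov-type lemma that the set of $p$ lying in $N_u(x)\cap N_u(y)$ for distinct $x,y$ has measure zero. Your identification of this last fact as the only non-routine ingredient is accurate.
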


\begin{rem} \label{s-c}
It follows that if $u_m$ is a sequence of  $C^2(\Omega)$ convex functions such that $u_m \to u$ uniformly on compact subsets of $\Omega$, with $u$ solving \eqref{m1}, then $\det D^2 u_m$ converges weakly to $f$ as measures. 
\end{rem}

We will often use the following lemma, the proof of which is given in section \ref{appendix}.

\begin{lemma} \label{Arzela}
Let $u_j$ denote a uniformly bounded sequence of convex functions on a convex domain $\Omega$. Then the sequence $u_j$ is locally uniformly equicontinuous and thus has a pointwise convergent subsequence.
\end{lemma}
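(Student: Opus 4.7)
The plan is to apply the Arzelà--Ascoli theorem. The substantive content of the lemma is the local uniform equicontinuity; once that is in hand, a standard diagonal extraction along a compact exhaustion of $\Omega$ produces a subsequence converging locally uniformly, which is stronger than the claimed pointwise convergence.

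To prove local equicontinuity I would establish the following standard quantitative fact: for a convex function $u$ on $\Omega$ with $\|u\|_{L^{\infty}(\Omega)} \leq M$, and for any compact $K \subset \Omega$ with $\delta := d(K,\partial \Omega) > 0$, the restriction $u|_K$ is Lipschitz with constant $4M/\delta$. Since $M$ is uniform in $j$ by hypothesis and $\delta$ depends only on $K$, this yields a uniform Lipschitz bound for the whole family $\{u_j\}$ on every compact subset of $\Omega$, hence local uniform equicontinuity.

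The Lipschitz estimate follows from a direct convexity argument. Fix $x \neq y$ in $K$ and set
\begin{equation*}
z := y + \frac{\delta}{2}\,\frac{y-x}{|y-x|}.
\end{equation*}
The set $K' := \{\, w \in \R^d : d(w,K) \leq \delta/2 \,\}$ is a compact subset of $\Omega$ (since $d(w,\partial\Omega) \geq \delta/2$ for $w \in K'$), and it contains $z$, so $|u(z)| \leq M$. Since $y$ lies on the segment from $x$ to $z$, we may write $y = (1-t)x + tz$ with
\begin{equation*}
t = \frac{|y-x|}{|y-x|+\delta/2} \;\leq\; \frac{2|y-x|}{\delta}.
\end{equation*}
Convexity of $u$ then gives
\begin{equation*}
u(y) - u(x) \;\leq\; t\bigl(u(z)-u(x)\bigr) \;\leq\; 2Mt \;\leq\; \frac{4M}{\delta}|y-x|.
\end{equation*}
Applying the same argument with the roles of $x$ and $y$ interchanged yields the two-sided Lipschitz bound.

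I expect the geometric construction of the auxiliary point $z$ to be the only nontrivial ingredient; once it is in place, the remainder is the routine Arzelà--Ascoli argument combined with a diagonal extraction over any exhaustion $K_1 \subset K_2 \subset \cdots$ of $\Omega$ by compact sets with $K_n \subset \operatorname{int} K_{n+1}$, and no further appeal to convexity is required.
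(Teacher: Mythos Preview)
Your proposal is correct, and the overall strategy---establish a uniform local Lipschitz bound and then apply Arzel\`a--Ascoli---is exactly what the paper does. The difference lies in how the Lipschitz bound is obtained. The paper bounds the subgradients: for $p_j \in \partial u_j(x)$ it invokes \cite[Lemma~3.2.1]{Guti'errez2001} to get $|p_j| \leq C/d(x,\partial\Omega)$, and then cites \cite[Lemma~1.1.6]{Guti'errez2001} to pass from a subgradient bound to a Lipschitz bound on compact subsets. Your argument instead uses the classical three-point overshoot trick directly on the convexity inequality, never mentioning the subdifferential. Your route is fully self-contained and elementary, with an explicit constant $4M/\delta$; the paper's route is shorter on the page but relies on external citations. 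Either way the substance is the same well-known fact that bounded convex functions are locally Lipschitz with constant controlled by the sup-norm and the distance to the boundary.
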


%\subsection{Approximations discontinuous at the boundary}
%Let  $B_h$ denote the 
%Bernstein interpolation operator of degree $k$ on $ \partial \Omega$ \cite{Lai2007}. It has the following property: for $g \in C(\partial \Omega)$
%\begin{equation} \label{Bernstein-sup} ||g-B_h g||_{0,\infty,\partial \Omega} \to 0 \, \text{as} \, h \to 0.
%\end{equation}
%Moreover, one can show that if $g_n$ is a sequence of functions such that $g_n \to g$ uniformly on $\partial \Omega$, then
%$B_h (g_n) \to B_h (g)$ uniformly on  $\partial \Omega$.
%Moreover, in dimension $d=2$, $B_h(g)^*=B_h(g)$ as the univariate Bernstein operator preserves convexity. % in one dimension.

%We note that the boundary condition $u_{m,h} = B_h(g)^*$ for \eqref{m1h} implies an approximation possibly discontinuous on the boundary. This is done for theoretical purposes. The most natural choice, which is also used for the analysis in \cite{Awanou-Std01,AwanouPseudo10}, is to take $u_{m,h}=I_h(g_m)$. % where $g$ denotes here a smooth extension of $g$ to $\tir{\Omega}$. 
%Redefining the values of the approximation on the boundary (a set of $d$-dimensional Lebesgue measure 0) allows us to have a convergence theory.

\subsection{Approximations by solutions on subdomains}

For a function $b$ defined on $\partial \Omega$, we denote by $b^*$ its convex envelope, i.e. the supremum of all convex functions below $b$. If $b$ can be extended to a continuous convex function on $\tir{\Omega}$, then $b^*=b$ on $\partial \Omega$. 

Following \cite{Savin13}, we define a notion of convergence for functions defined on different subdomains. Recall that $\Omega \subset \R^d$ is bounded and convex. For a %convex 
function $z: \Omega \to \R$, its upper graph $Z$ is given by
$$
Z : = \{ \, (x,x_{d+1}) \in \Omega \times \R, x_{d+1} \geq v(x) \, \}.
$$
For a function $b: \partial \Omega \to \R$, its upper graph is given by
$$
B: =  \{ \, (x,x_{d+1}) \in \partial \Omega \times \R, x_{d+1} \geq g(x) \, \}.
$$ 
\begin{defn}
We say that $z=b$ on $\partial \Omega$ if 
$$
B = \tir{Z} \cap (\partial \Omega \times \R).
$$
\end{defn}

\begin{defn} \label{haussdorff}
The Hausdorff distance between two nonempty subsets $K$ and $H$ of $\R^d$ is defined as
$$
\max \{ \, \sup [d(x,K), x \in H], \sup [d(x,H), x \in K]\, \}.
$$
\end{defn}

Let $\Omega_s \subset \Omega$ be a sequence of convex domains and let $z_s: \Omega_s \to \R$ be a sequence of convex functions on $\Omega_s$. We write $z_s \to z$ if the upper graphs $\tir{Z}_s$ converge in the Hausdorff distance to the upper graph $\tir{Z}$ of $z$. Similarly, for a sequence $b_s: \partial \Omega_s \to \R$, we say that $b_s \to b$ if the corresponding upper graphs converge in the Hausdorff distance. %We recall the definition of 
%Hausdorff distance in section \ref{appendix} (definition \ref{haussdorff}).

Finally, let $a_s: \Omega_s \to \R$ and $a: \Omega \to \R$. We write $a_s \to a$ if the $a_s$ are uniformly bounded and $a_s$ converges to $a$ uniformly on compact subsets of $\Omega$. 

To summarize, in Proposition \ref{savin} below, for a sequence of convex functions on $\Omega_s$ or for their restriction to $\partial \Omega_s$, the convergence is convergence of the corresponding upper graphs in the Hausdorff distance whereas for the data $a_s$ we use uniform convergence on compact subsets.

We have
\begin{prop} [Proposition 2.4 of \cite{Savin13}] \label{savin}
Let $z_s: \Omega_s \to \R$ be convex such that %$\det D^2 u_m$ is uniformly bounded on $\Omega$ and solve
$$
\det D^2 z_s = a_s \, \text{in} \, \Omega_s, 
z_s = b_s \, \text{on} \, \partial \Omega_s.
$$
If
$$
z_s \to z, 
a_s \to a, 
b_s \to b,
$$
then
$$
\det D^2 z =a \, \text{in} \, \Omega, 
z = b^* \, \text{on} \, \partial \Omega,
$$
where $b^*$ denotes the convex envelope of $b$ on $\partial \Omega$. In particular if
 $b$ can be extended to a continuous convex function on $\tir{\Omega}$, $z=b$ on $\partial \Omega$.

\end{prop}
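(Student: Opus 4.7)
The plan is to split the proof into an interior claim ($\det D^2 z = a$ in $\Omega$) and a boundary claim ($z = b^*$ on $\partial\Omega$), the first handled by Lemma \ref{weak-s} on weak continuity of Monge-Amp\`ere measures and the second by a convex-envelope argument. The main obstacle will be the boundary identification, since the boundaries $\partial \Omega_s$ need not coincide with $\partial \Omega$ and $b$ need not itself be convex-extendable.

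First I would promote the Hausdorff convergence $\tir{Z}_s \to \tir{Z}$ of upper graphs to local uniform convergence $z_s \to z$ on $\Omega$. For any compact $K \subset \Omega$, $K \subset \Omega_s$ for $s$ large; Hausdorff convergence of the closed upper graphs gives a uniform bound on $z_s|_K$; convexity together with Lemma \ref{Arzela} supplies local equicontinuity of $\{z_s\}$; and any cluster point's upper graph is contained in $\tir{Z}$, hence coincides with $z$. With this local uniform convergence, Lemma \ref{weak-s} yields $M[z_s] \to M[z]$ weakly in $\Omega$. For any $p \in C_c(\Omega)$ with support $K$ and $s$ large enough that $K \subset \Omega_s$,
\begin{align*}
\int_{\Omega} p \ud M[z_s] \,=\, \int_{\Omega_s} a_s\, p \ud x \,\longrightarrow\, \int_{\Omega} a\, p \ud x,
\end{align*}
using uniform boundedness of $\{a_s\}$ and uniform convergence on $K$. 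Uniqueness of weak limits then gives $M[z] = a\,\ud x$, so $\det D^2 z = a$ in the Aleksandrov sense.

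For the boundary identification, each $z_s$ is convex on $\Omega_s$ and continuous up to $\partial \Omega_s$, so $b_s$ already equals its own convex envelope. I would identify the boundary slice $\tir{Z} \cap (\partial \Omega \times \R)$ with the upper graph of $b^*$ by analyzing the Hausdorff limit of the $\tir{Z}_s$ near the boundary: at any $x_0 \in \partial \Omega$, the boundary value of $z$ encoded in $\tir{Z}$ is the infimum of values $\lim_s b_s(y_s)$ over all sequences $y_s \in \partial \Omega_s$ with $y_s \to x_0$, which in view of the Hausdorff convergence $b_s \to b$ and the convexity of each $z_s$ is exactly $b^*(x_0)$. Rigorously, I would verify two inclusions: the containment of the upper graph of $b^*$ in $\tir{Z} \cap (\partial \Omega \times \R)$ by approximating convex $w$ with $w \leq b$ on $\partial \Omega$ by convex $w_s$ on $\tir{\Omega}_s$ with $w_s \leq b_s$ on $\partial \Omega_s$ and invoking the Aleksandrov comparison principle against the $z_s$; and the reverse inclusion by using convexity of $z$ to see that its boundary lower semicontinuous trace is $\leq b$ on $\partial \Omega$, hence $\leq b^*$ by maximality of the envelope.

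The hard part is this last step, since the Hausdorff convergences of the domains and the graphs must be handled compatibly; the emergence of $b^*$ in place of $b$ in the limiting boundary trace reflects the convexity of each $z_s$ together with the continuity of the convex envelope operation under Hausdorff convergence of upper graphs. The final assertion that $z = b$ on $\partial \Omega$ whenever $b$ admits a continuous convex extension is then immediate from the characterization $b^* = b$ on $\partial \Omega$ in that case.
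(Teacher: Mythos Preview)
The paper does not give its own proof of this proposition: it is quoted directly from \cite{Savin13} and used as a black box (in particular, inside the appendix proof of Theorem~\ref{subdomain-cvg}). So there is nothing in the paper to compare your argument against; your proposal is an attempt to supply a proof where the paper simply cites one.

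On the substance of your sketch: the interior part is sound in outline. The passage from Hausdorff convergence of the closed upper graphs $\tir{Z}_s \to \tir{Z}$ to local uniform convergence $z_s \to z$ is itself a separate fact (it is Proposition~2.6 in \cite{Savin13}, also invoked in this paper), and once you have it, Lemma~\ref{weak-s} plus the assumed convergence $a_s \to a$ on compact subsets indeed identifies $M[z]$ with $a\,\ud x$.

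The boundary part, however, has real gaps. Your first inclusion proposes to take a convex $w$ with $w \le b$ on $\partial\Omega$, approximate it by $w_s$ with $w_s \le b_s$ on $\partial\Omega_s$, and invoke ``the Aleksandrov comparison principle against the $z_s$''. But the comparison principle requires information on $\det D^2 w_s$ relative to $a_s$, which you do not have for an arbitrary convex competitor $w$; as written this step does not go through. For the reverse inclusion you assert that convexity of $z$ forces its boundary trace to be $\le b$, but this is precisely the delicate point: the boundary values encoded by $\tir{Z}\cap(\partial\Omega\times\R)$ need not be bounded above by $b$ in general, and the appearance of $b^*$ rather than $b$ comes from analyzing how the convex upper graphs $\tir{Z}_s$ can collapse near $\partial\Omega$ in the Hausdorff limit. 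Your sketch acknowledges this is ``the hard part'' but does not actually carry it out. If you want to supply a proof here rather than cite \cite{Savin13}, you will need a genuine argument for the boundary identification, working directly with the Hausdorff convergence of the closed convex epigraphs and the definition of $b^*$ as a supremum of affine (or convex) minorants.
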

%We will apply the proposition

%We also remark that if $\Omega$ is strictly convex, we obtain $z=b$ on $\partial \Omega$, see \cite{Savin13}.

The following lemma will allow us to conclude that $b_s \to b$ when we know that $b_s$ converges uniformly to $b$. For the proof we refer to \cite[Exercise 9.40 ]{Giaquinta-Modica}.

\begin{lemma} \label{Giaquinta-Modica}
If $X$ is a compact metric space and $b_s: X \to \R$ converges uniformly to $b: X \to R$ on $X$, then the upper graph of $b_s$ converges to the upper graph of $b$ in the Hausdorff distance. %This can be seem for example as a consequence of  .
\end{lemma}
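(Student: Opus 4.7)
The plan is to exploit the very simple observation that translating the upper graph of $b_s$ vertically by the sup-norm error brings it inside the upper graph of $b$, and vice versa. I would set $\epsilon_s := \sup_{x \in X} |b_s(x) - b(x)|$, which tends to zero by uniform convergence on $X$. Writing $B_s$ and $B$ for the upper graphs of $b_s$ and $b$ respectively, the goal is to show $d_H(B_s, B) \leq \epsilon_s$, which then forces $B_s \to B$ in the Hausdorff distance as $s \to \infty$.

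First I would fix an arbitrary $(x, t) \in B_s$, so that $t \geq b_s(x) \geq b(x) - \epsilon_s$. Then the shifted point $(x, t + \epsilon_s)$ satisfies $t + \epsilon_s \geq b(x)$ and hence lies in $B$, while its distance to $(x,t)$ equals $\epsilon_s$. Taking the supremum over $(x,t) \in B_s$ gives $\sup_{y \in B_s} d(y, B) \leq \epsilon_s$. The symmetric argument, shifting an arbitrary point of $B$ upward by $\epsilon_s$ to produce a point of $B_s$, yields $\sup_{y \in B} d(y, B_s) \leq \epsilon_s$. Combining the two bounds gives $d_H(B_s, B) \leq \epsilon_s$, and letting $s \to \infty$ completes the proof.

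There is essentially no obstacle here; the only mildly delicate point is that the upper graphs are unbounded sets in $X \times \R$, but since the vertical shift $\epsilon_s$ is independent of the height $t$ the uniform estimate still holds. The compactness of $X$ is used only to guarantee that $\epsilon_s$ is a finite quantity for each $s$, so that the statement $\epsilon_s \to 0$ has its usual meaning.
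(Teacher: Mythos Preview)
Your argument is correct: the vertical shift by $\epsilon_s$ gives the two one-sided containments immediately, and hence $d_H(B_s,B)\le\epsilon_s\to 0$. The paper does not supply its own proof of this lemma but merely cites an exercise in a textbook, so there is nothing to compare against; your direct argument is exactly the kind of elementary verification that reference is standing in for. One small remark: compactness of $X$ is not actually needed anywhere in your proof, since uniform convergence already forces $\epsilon_s<\infty$ and $\epsilon_s\to 0$ by definition.
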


We state an approximation result for Monge-Amp\`ere equations and give the proof in section \ref{appendix}.

\begin{thm} \label{subdomain-cvg}
Let $\Omega_s$ be a sequence of convex domains  increasing to $\Omega$, i.e. $\Omega_s \subset \Omega_{s+1} \subset  \Omega$ and $d(\partial \Omega_s, \partial \Omega) \to 0$ as $s \to \infty$. Assume that $z_s \in C(\tir{\Omega}_s)$ is a sequence of convex functions solving
$$
\det D^2 z_s = a_s \, \text{in} \, \Omega_s, z_s = b_s \, \text{on} \, \partial \Omega_s,
$$ 
with $a_s \geq 0$, $a_s, a \in C(\tir{\Omega})$. % uniformly bounded on $\Omega$. % and in $L^1_{\text{loc}}(\Omega_m)$.  %$f_{m+1} \geq f_m$ in $\Omega$, 
Assume that $a_s$ converges uniformly to $a$ on $\tir{\Omega}$, %weakly to $f$ as measures and %$g_m \in C( \tir{\Omega})$ $g \leq g_{m+1} \leq g_m$ on $\partial \Omega_m$ 
$b_s \in C(\tir{\Omega}_s)$, $b_s \to b$ uniformly on $\tir{\Omega}$ with $b \in C(\tir{\Omega})$ and convex on $\Omega$. 
%We also make the assumption that $\int_{\Omega} f_m \ud x \leq A$ for a constant $A>0$.

Then $z_s$  converges (up to a subsequence) uniformly  on compact subsets of $\Omega$ to the unique convex solution $z$ of %\eqref{m1}. % (if it exists).
$$
\det D^2 z =a \, \text{in} \, \Omega, 
z = b \, \text{on} \, \partial \Omega,
$$
\end{thm}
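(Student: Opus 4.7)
The plan is to apply Proposition \ref{savin} to a Hausdorff-subsequential limit of the upper graphs of $z_s$. Three ingredients are needed: (i) a uniform $L^\infty$ bound on the $z_s$, (ii) the extraction of a locally uniformly convergent subsequence on $\Omega$ by Arzel\`a--Ascoli, and (iii) the verification that this convergence upgrades to the Hausdorff graph convergence required by Proposition \ref{savin}.

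\textbf{Uniform bound.} Let $M:=\sup_s\|b_s\|_{L^\infty(\overline\Omega)}$, which is finite by uniform convergence of $b_s$ to $b$. Convexity of $z_s$ together with $z_s=b_s$ on $\partial\Omega_s$ gives $z_s\le M$ on $\Omega_s$. For the matching lower bound I would apply Aleksandrov's maximum principle (Guti\'errez, Theorem 1.4.2) to the convex envelope $\hat w_s$ on $\Omega_s$ of $\min(z_s+M,0)$: this envelope is convex on the convex set $\Omega_s$, vanishes on $\partial\Omega_s$, and its Monge--Amp\`ere measure is supported on the contact set with $z_s+M$, where it is dominated by $a_s$. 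This yields $|\hat w_s(x)|^d\le c_d\,\diam(\Omega)^d\,\|a_s\|_{L^1(\Omega)}\le C$ uniformly in $s$, since $a_s$ is uniformly bounded on $\overline\Omega$. Because $z_s+M\ge\hat w_s$, we obtain $\|z_s\|_{L^\infty(\Omega_s)}\le C'$ independently of $s$.

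\textbf{Compactness and graph convergence.} Fix a compact exhaustion $K_1\subset K_2\subset\dots$ of $\Omega$; since $d(\partial\Omega_s,\partial\Omega)\to 0$ and the $\Omega_s$ are increasing, each $K_j$ lies in $\Omega_s$ for $s$ large, and the uniformly bounded convex functions $z_s|_{K_j}$ are locally uniformly equicontinuous by Lemma \ref{Arzela}. A diagonal extraction yields a subsequence, still denoted $z_s$, converging pointwise, hence locally uniformly by convexity, to a convex function $z:\Omega\to\R$. For the boundary data, $b_s\to b$ uniformly on $\overline\Omega$ together with $d(\partial\Omega_s,\partial\Omega)\to 0$ implies Hausdorff convergence of the upper graphs of $b_s|_{\partial\Omega_s}$ to that of $b|_{\partial\Omega}$, in the spirit of Lemma \ref{Giaquinta-Modica}. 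For the $z_s$, the upper graphs $\overline Z_s$ are closed convex subsets of $\overline\Omega\times[-C',\infty)$ whose intersections with slabs $\overline\Omega\times[-C',N]$ form a Blaschke-precompact family, so a further subsequence converges in Hausdorff distance to a closed convex set. By the interior locally uniform convergence established above, this Hausdorff limit agrees with the upper graph of $z$ on every interior compact, hence is the upper graph of a continuous convex extension of $z$ to $\overline\Omega$. Proposition \ref{savin} now gives $\det D^2 z=a$ in $\Omega$ and $z=b^*=b$ on $\partial\Omega$, the last equality using the hypothesized continuous convex extension of $b$ to $\overline\Omega$. Uniqueness of Aleksandrov solutions (Theorem \ref{ex-Alex}) pins down $z$ and justifies the subsequential statement.

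\textbf{Main obstacle.} The delicate point is the graph convergence for the $z_s$: the interior locally uniform convergence produced by Arzel\`a--Ascoli does not by itself control how $z_s$ behaves near $\partial\Omega_s$, and the Hausdorff limit of $\overline Z_s$ could a priori contain vertical segments over boundary points of $\Omega$. Excluding these is exactly where the uniform convergence $b_s\to b$ on all of $\overline\Omega$, rather than merely on interior compacts, together with the quantitative exhaustion $d(\partial\Omega_s,\partial\Omega)\to 0$, is indispensable; it is what allows Proposition \ref{savin} to produce the correct boundary identification $z=b$ on $\partial\Omega$ and thereby to identify the interior limit with the unique Aleksandrov solution.
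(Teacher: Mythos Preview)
Your proof is correct and follows the paper's strategy: uniform bounds (convexity for the upper bound, Aleksandrov's maximum principle for the lower), Lemma~\ref{Arzela} for compactness, then Proposition~\ref{savin}. The paper streamlines two of your steps: it applies Aleksandrov's principle directly to $z_s-\min_{\partial\Omega_s}b_s$ (no convex-envelope construction is needed, since $z_s$ is already convex), and for the passage from interior locally uniform convergence to Hausdorff graph convergence it simply invokes \cite[Proposition~2.6]{Savin13}, which asserts exactly this implication for bounded convex functions and thereby bypasses your Blaschke-selection-and-identification argument.
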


%\COMMENT{Something must be wrong in the previous argument. It seems to give existence of a solution on a convex domain without requiring the boundary data to be extendable to a continuous convex function on $\tir{\Omega}$. Also, is $v$ continuous up to the boundary?}

\begin{rem} \label{cvg-um}
Under the assumptions of the above theorem, with $\Omega_s=\Omega$ for all $s$, we get that the sequence $z_s$ converges uniformly on compact subsets to the solution $z$. % of \eqref{m1}.
\end{rem}

\begin{rem} \label{extension}
%On the other hand 
If $v_s$ is a sequence of (piecewise) convex functions which converge on $\Omega$ to a (piecewise) convex function $v$ with upper graph $V$, we can extend $v$ canonically to the boundary by taking the function on $\partial \Omega$ with upper graph $\tir{V} \cap \partial \Omega \cap \R$. %Thus $v_s$ converges to $v$ in the Hausdorff distance.
\end{rem}

\subsection{A characterization of weak convergence of measures}

%We first recall an elementary result. One reference for this result is \cite{Apostol74}.
%\begin{lemma}  [\cite{Apostol74}] \label{double} Let $a_{ml}$ be a double sequence. Assume that $\lim_{m\to \infty}(\lim_{l\to \infty}a_{ml})=a$ and that  $\lim_{l \to \infty}a_{ml}$ exists uniformly in $m$, then the double limit $\lim_{m,l \to \infty}$ exists and is equal to $a$. It follows that the 
%iterated limit $\lim_{l\to \infty}(\lim_{m\to \infty}a_{ml})$ also exists and is equal to $a$ provided $\lim_{m\to \infty}a_{m l}$ exists for each $l$.
%\end{lemma}
The result we now give is well-known but we give a proof in section \ref{appendix} for completeness.

Let $C_b(\Omega)$ denote the space of bounded continuous functions on $\Omega$. We have
\begin{lemma} \label{weakH0}
Let %$\Omega$ be a bounded domain and 
$a_m, a \in C_b(\Omega), a_m, a \geq 0$ for $m=0,1,\ldots$ Assume that the sequence $a_m$ is uniformly bounded on $\Omega$ and that $a_m$ converges weakly to $a$ as measures and let $p \in H^{1}_0(\Omega)$. We have
$$
\int_{\Omega} a_m p \ud x \to \int_{\Omega} a p \ud x,
$$
as $m \to \infty$.
\end{lemma}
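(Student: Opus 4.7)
The plan is a standard three-epsilon density argument: weak convergence as measures gives the result for test functions $p\in C_c(\Omega)$, and we upgrade to $p\in H^1_0(\Omega)$ using the density of $C_c^\infty(\Omega)$ in $H^1_0(\Omega)$ together with the uniform $L^\infty$ bound on $(a_m)$ and the boundedness of $a$.

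First, I would fix $\varepsilon>0$ and choose a constant $M$ with $\|a_m\|_{L^\infty(\Omega)}\le M$ for all $m$ and $\|a\|_{L^\infty(\Omega)}\le M$ (possible since $a_m,a\in C_b(\Omega)$ and the sequence is uniformly bounded). Since $\Omega$ is bounded and convex, $C_c^\infty(\Omega)$ is dense in $H^1_0(\Omega)$, and by the continuous embedding $H^1_0(\Omega)\hookrightarrow L^2(\Omega)$ together with the Cauchy--Schwarz inequality $\|\cdot\|_{L^1(\Omega)}\le |\Omega|^{1/2}\|\cdot\|_{L^2(\Omega)}$, I can pick $p_\varepsilon\in C_c^\infty(\Omega)$ with
\[
\|p-p_\varepsilon\|_{L^1(\Omega)}<\frac{\varepsilon}{3M}.
\]

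Next, since $p_\varepsilon$ is continuous with compact support in $\Omega$, the hypothesis that $a_m$ converges weakly to $a$ as measures yields
\[
\int_\Omega a_m p_\varepsilon\,dx\;\longrightarrow\;\int_\Omega a\, p_\varepsilon\,dx,
\]
so there exists $N=N(\varepsilon)$ such that $\bigl|\int_\Omega a_m p_\varepsilon\,dx-\int_\Omega a\,p_\varepsilon\,dx\bigr|<\varepsilon/3$ for all $m\ge N$.

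Finally I would combine these estimates through the triangle inequality:
\[
\Bigl|\int_\Omega a_m p\,dx-\int_\Omega a\,p\,dx\Bigr|
\le\Bigl|\int_\Omega a_m(p-p_\varepsilon)\,dx\Bigr|+\Bigl|\int_\Omega a_m p_\varepsilon\,dx-\int_\Omega a\,p_\varepsilon\,dx\Bigr|+\Bigl|\int_\Omega a(p_\varepsilon-p)\,dx\Bigr|.
\]
Bounding the first and third terms by $M\|p-p_\varepsilon\|_{L^1(\Omega)}<\varepsilon/3$ and the middle term by $\varepsilon/3$ for $m\ge N$ gives the conclusion. I do not anticipate any real obstacle here; the only subtle point is that weak convergence of measures is formulated with test functions in $C_c(\Omega)$ rather than $H^1_0(\Omega)$, and the uniform bound on $(a_m)$ is precisely what allows the density argument to go through.
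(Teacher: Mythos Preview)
Your proof is correct and follows essentially the same density argument as the paper: approximate $p\in H^1_0(\Omega)$ by $C_c^\infty(\Omega)$ functions, use weak convergence of measures on the smooth approximant, and control the remainder via the uniform $L^\infty$ bound on $(a_m)$ together with $\|\cdot\|_{L^1(\Omega)}\le |\Omega|^{1/2}\|\cdot\|_{L^2(\Omega)}$. The only cosmetic difference is that the paper groups the estimate into two terms via $\int_\Omega (a_m-a)p = \int_\Omega (a_m-a)(p-p_l) + \int_\Omega (a_m-a)p_l$, whereas you carry out an explicit three-$\varepsilon$ split; the content is identical.
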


\subsection{Useful facts about convex functions}

It is known that the pointwise limit of a sequence of convex functions is convex. It follows that the pointwise limit of a sequence of piecewise convex functions is also piecewise convex.

Also, every pointwise convergent sequence of convex functions converges uniformly on compact subsets. See for example  \cite[Remark 1 p. 129 ]{Bakelman1994}. The result immediately extends to a sequence of piecewise convex functions.

\section{Smooth and polygonal exhaustions of the domain} \label{exhaustion}

It is known from \cite{Blocki97} for example that there exists a sequence of smooth uniformly convex domains $\Omega_s$ increasing to $\Omega$, i.e. $\Omega_s \subset \Omega_{s+1} \subset\Omega$ and $d(\partial \Omega_s, \partial \Omega) \to 0$ as $s \to \infty$. An explicit construction of the sequence $\Omega_s$ in the special case $\Omega=(0,1)^2$ can be found in \cite{Sulman11}.

Recall that $f_m$ and $g_m$ are $C^{\infty}(\tir{\Omega})$ functions such that  $0 < c_2 \leq f_m \leq c_3, f_m \to f$ and $g_m \to \tilde{g}$ uniformly on $\tir{\Omega}$. %We may take $f_{\epsilon}$ and $g_{\epsilon}$ with $\epsilon=d(\partial \Omega_m, \partial \Omega)/2$. 
Thus the sequences $f_m$ and $g_m$ are uniformly bounded on $\Omega$. 
%The sequences $f_m$ and $g_m$ may be constructed by mollification and dilatation as in \cite{MongeC1Alex}. %For $g$ one first considers a continuous extension to $\tir{\Omega}$.
The sequences $f_m$ and $g_m$ may be constructed by extending the given functions to a slightly larger domain preserving the property $f \geq C >0$ for some constant $C$ and apply a standard mollification. See \cite{MongeC1Alex} for a different procedure. 
By  \cite{Caffarelli1984}, %\cite{Gilbarg2001} Theorem 17.23, 
the problem \eqref{m1-ms}
%\begin{align} \label{m1-sub-m}
%\begin{split}
%\det D^2 u_{m s} & = f_m \, \text{in} \, \Omega_s \\
%u_{m s} & = g_m  \, \text{on} \, \partial \Omega_s,
%\end{split}
%\end{align}
has a unique convex solution $u_{m s} \in C^{\infty}(\tir{\Omega}_s)$. By Theorem \ref{subdomain-cvg}, as $s \to \infty$, the sequence $u_{m s}$  converges uniformly  on compact subsets of $\Omega$ to the unique convex solution $u_m \in C(\tir{\Omega})$ of Problem \eqref{m1m}. 
Moreover, the solution $u_m$ of \eqref{m1m} converges uniformly  on compact subsets of $\Omega$ to the unique convex solution $u$ of \eqref{m1}.

Recall that $\widetilde{\Omega}$ is a convex polygonal subdomain of $\Omega$ with a quasi-uniform triangulation $\mathcal{T}_h(\widetilde{\Omega})$.
We let $\delta >0$ be a fixed parameter and chose $m$ and $\widetilde{\Omega}$ such that 
 $| f(x) - f_{m}(x)| < \delta$, $| \tilde{g}(x) - g_{m}(x)| < \delta$  and $| u(x) - u_{m}(x)| < \delta$ for all $x \in \widetilde{\Omega}$. Without loss of generality we may assume that $\widetilde{\Omega} \subset \Omega_s$ for all $s$. 

We have

%Approximating $\Omega_s$ from the interior by a polygonal domain, we may assume that $\Omega_s$ has a  polygonal boundary and $u_{ms}$ is smooth on $\Omega_s$.
%We define %$\mathcal{T}_h^s$ as a maximal subset of $\mathcal{T}_h$ such that $\bigg( \cup_{K \in \mathcal{T}_h^s} K \bigg) \subset \Omega_s$ with $\cup_{K \in \mathcal{T}_h^s} K$ convex and let
%\begin{equation} \label{omega-sh}
%\Omega_{sh} =  \cup_{K \in \mathcal{T}_h} K \cap \Omega_s.
%\end{equation}
%Note  that as $s \to \infty$, $\Omega_{sh}$ is a sequence of convex domains increasing to $\Omega$ and $\Omega_{sh}=\Omega_s$. 

\begin{thm} \label{exh-arg} %Let $f_m$ and $g_m$ be $C^{\infty}(\tir{\Omega})$ functions such that  $0 < c_2 \leq f_m \leq  c_3, f_m \to f$ and $g_m \to \tilde{g}$ uniformly on $\tir{\Omega}$. 
%Then the problem: find $u_{m,h} \in V_h$, $u_{m,h}=I_h g_{m}$ on $\partial \Omega$ and
%\begin{equation} \label{m1h} \sum_{K \in \mathcal{T}_h}\int_{K} (\det  D^2 u_{m,h} - f_m ) v_h \ud x=0, \forall v_h \in V_h \cap H_0^1(\Omega), \end{equation}
There exists a piecewise convex function $u_{h} \in V_h(\widetilde{\Omega})$ which is uniformly bounded on compact subsets of $\widetilde{\Omega}$ uniformly in $h$. The function $u_{h}$ 
satisfies $u_{h}=u_{m}$ on $\partial \widetilde{\Omega}$ and
is obtained as the limit of a subsequence in $s$ of the piecewise convex solution $u_{m s,h}$ in $V_h(\widetilde{\Omega})$ of the problem: 
\begin{equation} \label{m1h-sub-m}
\sum_{K \in \mathcal{T}_h}\int_{K\cap \widetilde{\Omega} } (\det  D^2 u_{m s,h} - f_{m} ) v_h \ud x=0, \forall v_h \in V_h(\widetilde{\Omega}) \cap H_0^1(\widetilde{\Omega}),
\end{equation}
with $u_{m s,h}= u_{m s}$ on $\partial \widetilde{\Omega}$. 
\end{thm}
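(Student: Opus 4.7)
The plan is to combine the existence-and-error-estimate result for smooth solutions (Theorem \ref{smooth-Omega-m}, Corollary \ref{unif-cvg}) with the uniform interior Schauder bound (Lemma \ref{lem:int-Schauder}) to get a family $\{u_{ms,h}\}_{s}$ of discrete solutions that is uniformly bounded in $s$, and then use finite-dimensionality of $V_h(\widetilde{\Omega})$ to extract a convergent subsequence. This is exactly Steps 4--7 of the methodology in Section \ref{methods} specialized to the present setting.

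First, I would fix $m$, $\widetilde{\Omega}$ as in Section \ref{relevance}, and fix $s$. Since $\overline{\widetilde{\Omega}} \subset \Omega_s$ and $u_{ms} \in C^{\infty}(\overline{\Omega}_s)$ by the classical Caffarelli regularity theory, the restriction $u_{ms}|_{\overline{\widetilde{\Omega}}}$ is a smooth, strictly convex solution of $\det D^2 u_{ms} = f_m$ on $\widetilde{\Omega}$ with boundary trace $u_{ms}|_{\partial \widetilde{\Omega}}$. Applying Theorem \ref{smooth-Omega-m} with the role of $\Omega_s$ played by $\widetilde{\Omega}$ and the role of $u_s$ played by $u_{ms}$, I obtain, for $h$ sufficiently small, a locally unique piecewise convex finite element solution $u_{ms,h} \in V_h(\widetilde{\Omega})$ of \eqref{m1h-sub-m} with $u_{ms,h} = u_{ms}$ at the Lagrange nodes of $\partial \widetilde{\Omega}$, satisfying
$$
\|u_{ms} - u_{ms,h}\|_{2,h,\widetilde{\Omega}} \leq C_{ms} h^{l-1}, \qquad 2 \leq l \leq k.
$$

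Next, I would invoke Lemma \ref{lem:int-Schauder} to bound $\|u_{ms}\|_{C^{k+1}(\widetilde{\Omega})} \leq C_m$ uniformly in $s$; by the last statement of Theorem \ref{smooth-Omega-m}, this makes $C_{ms}$ uniformly bounded in $s$ by some $\tilde C_m$. Using Corollary \ref{unif-cvg}, I then obtain
$$
\|u_{ms} - u_{ms,h}\|_{0,\infty,\widetilde{\Omega}} \leq \tilde C_m \, h^{l-1}
$$
uniformly in $s$. Combining this with the uniform bound on $\|u_{ms}\|_{0,\infty,\widetilde{\Omega}}$ (a consequence of $u_{ms} \to u_m$ uniformly on compact subsets of $\Omega$ from Theorem \ref{subdomain-cvg}), I conclude that for $h$ small enough, the family $\{u_{ms,h}\}_{s}$ is uniformly bounded in $L^{\infty}(\widetilde{\Omega})$, with a bound independent of $s$. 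Moreover there exists an $h_0>0$ (depending on $\tilde C_m$ but not on $s$) such that $u_{ms,h}$ exists for all $h \leq h_0$ and all $s$.

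Now I would fix $h \leq h_0$. Since $V_h(\widetilde{\Omega})$ is finite-dimensional and all norms on it are equivalent (with constants depending on $h$ but not on $s$), the $L^{\infty}$ bound gives a bound in every seminorm appearing in \eqref{m1h-sub-m}. By Bolzano--Weierstrass in this finite-dimensional space, I can pass to a subsequence $s_j \to \infty$ such that $u_{m s_j, h} \to u_h$ in $V_h(\widetilde{\Omega})$ (in every norm simultaneously). The limit $u_h$ is piecewise convex, since piecewise convexity is preserved under pointwise limits. Because the test functions $v_h$ and the right-hand side $f_m$ are independent of $s$, the integral identity \eqref{m1h-sub-m} passes to the limit and yields \eqref{main-problem}. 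For the boundary condition, $u_{m s_j, h} = u_{m s_j}$ at each Lagrange node of $\partial \widetilde{\Omega}$, and $u_{m s_j} \to u_m$ uniformly on $\overline{\widetilde{\Omega}}$, so $u_h = u_m$ at those Lagrange nodes, which is the sense of the statement $u_h = u_m$ on $\partial \widetilde{\Omega}$.

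The only delicate point is ensuring that the threshold below which the discrete problem for $u_{ms,h}$ admits a piecewise convex solution does not collapse to zero as $s \to \infty$; this is precisely where the uniform interior Schauder bound of Lemma \ref{lem:int-Schauder} is essential, and is the main reason for working on the interior subdomain $\widetilde{\Omega}$ rather than on $\Omega$ itself. Once this uniformity is in hand, the subsequence extraction and limit passage are routine because everything happens in the finite-dimensional space $V_h(\widetilde{\Omega})$.
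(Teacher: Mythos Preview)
Your proof is correct and follows the same overall strategy as the paper: apply Theorem~\ref{smooth-Omega-m} on $\widetilde{\Omega}$, use the interior Schauder bound of Lemma~\ref{lem:int-Schauder} to make the error estimate uniform in $s$, and extract a convergent subsequence. The one genuine difference is in the compactness step. The paper invokes Lemma~\ref{Arzela} (local equicontinuity of piecewise convex functions plus Arzel\`a--Ascoli) to get a pointwise-convergent subsequence, and then argues separately that the limit lies in $V_h(\widetilde{\Omega})$ by observing that convergence of piecewise polynomials of fixed degree amounts to convergence of coefficients and that the continuity constraints are linear. You instead exploit directly that $V_h(\widetilde{\Omega})$ is finite-dimensional and use Bolzano--Weierstrass; this is more elementary, and membership of the limit in $V_h(\widetilde{\Omega})$ comes for free. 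Your route also yields convergence in every norm on $V_h(\widetilde{\Omega})$ simultaneously, which makes your parenthetical remark that the integral identity \eqref{m1h-sub-m} passes to the limit immediate (since $\det D^2 u_{ms_j,h} \to \det D^2 u_h$ uniformly elementwise); the paper defers this to Sections~\ref{cvg} and~\ref{cvg-C0} and proves it via weak convergence of Monge--Amp\`ere measures. Note, however, that the paper's use of Lemma~\ref{Arzela} is not gratuitous: the same lemma is reused in Theorem~\ref{noreg} for compactness in $h$, where finite-dimensionality is no longer available. One small point: the theorem asserts the bound on $u_h$ is uniform in $h$ as well as in $s$; you obtain this but do not state it explicitly---it follows from your estimate $\|u_{ms,h}\|_{0,\infty,\widetilde{\Omega}} \leq \|u_{ms}\|_{0,\infty,\widetilde{\Omega}} + \tilde C_m h^{l-1}$ together with the uniform bound on $\|u_{ms}\|_{0,\infty,\widetilde{\Omega}}$.
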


\begin{proof}

Since $u_{m s}$ is smooth on $\tir{\Omega}_s$, Theorem \ref{smooth-Omega-m} yields a solution to Problem \eqref{m1h-sub-m}. Given a compact subset $K$ of $\widetilde{\Omega}$, we have 
\begin{align} \label{analogous}
||u_{m s} - u_{m s, h}||_{0,\infty,K} \leq ||u_{m s} - u_{m s, h}||_{0,\infty,\widetilde{\Omega}}  \leq C ||u_{m s}||_{k+1,\infty,\widetilde{\Omega}} \ h^{k-1}.
\end{align}
since $\widetilde{\Omega} \subset \Omega_s$. 
By the interior Schauder estimates Lemma \ref{lem:int-Schauder}, the sequence in $s$ of piecewise convex functions $u_{m s,h}$ is uniformly bounded on compact subsets, and hence by Lemma \ref{Arzela} has a convergent subsequence also denoted by $u_{ m s,h}$ which converges pointwise to a function $u_{h}$. 
The function $u_{h}$ is piecewise convex as the pointwise limit of piecewise convex functions and the convergence is uniform on compact subsets.

Next, we note that for a fixed $h$, $u_{m s,h}$ is a piecewise polynomial in the variable $x$ of fixed degree $k$ and convergence of polynomials is equivalent to convergence of their coefficients. Thus $u_{h}$ is a piecewise polynomial of degree $k$. Moreover, the continuity conditions on $u_{m s,h}$ are linear equations involving its coefficients. Thus $u_{h}$ has the same continuity property as $u_{m s,h}$. In other words $u_{h} \in V_h(\widetilde{\Omega})$.

Finally, since $u_{m s}$ converges uniformly on compact subsets to $u_{m}$ as $s \to \infty$, %$u_{h}$ is a piecewise convex polynomial, it is continuous up to the boundary and thus 
we have on $\partial \widetilde{\Omega}$, $u_{h}= u_{m}$ as $\partial \widetilde{\Omega}$ is by construction a compact subset of $\Omega$. 

As a consequence of the interior Schauder estimates, $u_{h}$ is uniformly bounded on compact subsets of $\widetilde{\Omega}$ uniformly in $h$.

\end{proof}

The goal of the next two sections is to prove that the function $u_{h}$ given by Theorem \ref{exh-arg} solves Problem \eqref{main-problem}. %: find $u_{m,h} \in V_h$, $u_{m,h}= I_h (g_{m})$ on $\partial \Omega$ and
%\begin{equation} \label{m1h} \sum_{K \in \mathcal{T}_h}\int_{K} (\det  D^2 u_{m,h} - f_m ) v_h \ud x=0, \forall v_h \in V_h \cap H_0^1(\Omega). \end{equation}

%It remains to prove that $u_{m,h}$ solves the variational formulation \eqref{m1h}. %We summarize the discussion of this section in the following theorem

%\COMMENT{One can show that $\det D^2 u_{m s, h}$ is uniformly bounded in $s$. But it appears not clear why $\det D^2 u_{m,h}$ will be uniformly bounded in $m$}

%\section{Existence of a discrete solution for $C^1$ approximations. } \label{discrete}
%We start with an elementary result about double sequences. One reference for this result is \cite{Apostol74}.

\section{Solvability of the discrete problems for $C^1$ approximations. } \label{cvg}

The goal of this section is to prove that \eqref{main-problem} has a solution in the case where the approximation space $V_h(\widetilde{\Omega})$ is a space of $C^1$ functions. Then Problem \eqref{m1h-sub-m} can be written
\begin{equation} \label{discrete-subd}
\int_{\widetilde{\Omega}} (\det  D^2 u_{m s,h} - f_{m} ) v_h \ud x=0, \forall v_h \in V_h(\widetilde{\Omega}) \cap H_0^1(\widetilde{\Omega}).
\end{equation}
To see that the left hand side of the above equation is well defined, one may proceed as  in \cite{AwanouPseudo10}. In addition the discrete solution $u_{m s,h}$ being piecewise convex and $C^1$ is convex on $\widetilde{\Omega}$, c.f. \cite[section 5 ]{Dahmen91}. We define
$$
f_{m s,h} = \det D^2 u_{m s,h}.
$$
We can then view $u_{m s,h}$ as the solution (in the sense of Aleksandrov) of the Monge-Amp\`ere equation
\begin{align*}
\det D^2 u_{m s,h} = f_{m s,h} \, \text{in} \, \widetilde{\Omega}. %, \,  u_{m s,h} = u_{ms} \, \text{on} \, \partial \widetilde{\Omega}.
\end{align*}

By Lemma \ref{weak-s}, $\det D^2 u_{m s_l,h} \to \det D^2 u_{h}$ weakly as measures for a subsequence $s_l \to \infty$. Then by Lemma \ref{weakH0} we get for $v \in V_h(\widetilde{\Omega}) \cap H_0^1(\widetilde{\Omega})$,
\begin{equation} \label{det-cvg1}
\int_{\widetilde{\Omega}} (\det D^2 u_{m s_l,h}) v \ud x \to  \int_{\widetilde{\Omega}} (\det D^2 u_{h}) v \ud x. 
\end{equation}
% But $\det D^2 u_{mh} = f_{mh}$ and \ldots

It remains to prove that as $l \to \infty$
\begin{equation*} %\label{det-cvg1}
\int_{\widetilde{\Omega}} (\det D^2 u_{m s_l,h}) v \ud x \to  \int_{\widetilde{\Omega}} f_{m} v \ud x. 
\end{equation*}
This is essentially what is proved in the next theorem

 \begin{thm} \label{cvg-c1-app}
 Let $V_h(\widetilde{\Omega})$ denote a finite dimensional space of $C^1$ functions satisfying the assumptions of approximation property and inverse estimates of section \ref{finite-dim}. Then Problem \eqref{main-problem} has a convex solution $u_{h}$.  %and there exists a subsequence $(m,h_l)$ such that $u_{m,h_l}$ converges uniformly on compact subsets of $\Omega$ to the unique convex solution $u$ of \eqref{m1}.
 \end{thm}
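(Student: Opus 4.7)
The bulk of the work has been done: $u_h$ is already provided by Theorem \ref{exh-arg} as the pointwise limit (up to a subsequence $s_l$) of the discrete solutions $u_{m s_l, h}$ to \eqref{m1h-sub-m}, and the passage \eqref{det-cvg1} recorded just before the theorem statement already asserts
$$\int_{\widetilde{\Omega}} (\det D^2 u_{m s_l, h}) v \, dx \longrightarrow \int_{\widetilde{\Omega}} (\det D^2 u_h) v \, dx$$
for every $v \in V_h(\widetilde{\Omega}) \cap H_0^1(\widetilde{\Omega})$. My task in the proof is to close the loop by observing that the left-hand side above is actually constant in $l$, namely equal to $\int_{\widetilde{\Omega}} f_m v \, dx$, and to verify convexity of the limit.

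For the convexity: each $u_{m s_l, h}$ is $C^1$ and piecewise $C^2$ and piecewise convex, hence globally convex on $\widetilde{\Omega}$ by the result of \cite[section 5]{Dahmen91} already cited before \eqref{discrete-subd}. The pointwise limit $u_h$ of a sequence of convex functions is again convex, and it belongs to $V_h(\widetilde{\Omega})$, hence is $C^1$. In particular, the Monge-Amp\`ere measure $M[u_h]$ is absolutely continuous with a density $\det D^2 u_h$ defined almost everywhere, so integrals of the form $\int_{\widetilde{\Omega}} (\det D^2 u_h) v \, dx$ are genuine Lebesgue integrals and the elementwise sum in \eqref{main-problem} coincides with the integral over $\widetilde{\Omega}$. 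The boundary condition $u_h = u_m$ on $\partial \widetilde{\Omega}$ at the Lagrange nodes has already been established in Theorem \ref{exh-arg}.

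For the main identity, fix $v \in V_h(\widetilde{\Omega}) \cap H_0^1(\widetilde{\Omega})$. Because $u_{m s_l, h}$ satisfies \eqref{discrete-subd}, for every $l$
$$\int_{\widetilde{\Omega}} (\det D^2 u_{m s_l, h}) v \, dx = \int_{\widetilde{\Omega}} f_m v \, dx.$$
Passing to the limit $l \to \infty$, using \eqref{det-cvg1} on the left and the fact that the right-hand side is independent of $l$, yields
$$\int_{\widetilde{\Omega}} (\det D^2 u_h - f_m) v \, dx = 0$$
for all admissible $v$, which is precisely \eqref{main-problem}.

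The one delicate step hidden in the above is the application of Lemma \ref{weakH0} that underlies \eqref{det-cvg1}: that lemma is stated for densities in $C_b(\widetilde{\Omega})$, whereas $\det D^2 u_{m s_l, h}$ is only piecewise continuous across element interfaces. To legitimize this, I would note that for fixed $h$, the inverse inequality \eqref{inverse} combined with the uniform $L^\infty$ bound on $u_{m s_l, h}$ coming from Lemma \ref{lem:int-Schauder} and \eqref{analogous} yields a Lipschitz bound on $u_{m s_l, h}$ uniform in $l$; by the area formula this bounds the total Monge-Amp\`ere mass $M[u_{m s_l, h}](\widetilde{\Omega})$ uniformly in $l$. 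Since $v \in V_h \cap H_0^1$ is continuous on $\overline{\widetilde{\Omega}}$ and vanishes on $\partial \widetilde{\Omega}$, it is uniformly approximable by continuous functions of compact support in $\widetilde{\Omega}$, and a standard three-$\varepsilon$ estimate combining this approximation with the weak convergence from Lemma \ref{weak-s} against $C_c$ test functions and the uniform bound on the total masses delivers the desired convergence for the test function $v$.
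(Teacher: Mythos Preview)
Your proof is correct and, for the central step, actually cleaner than the paper's. You observe directly that since $v \in V_h(\widetilde{\Omega}) \cap H_0^1(\widetilde{\Omega})$ is itself an admissible test function in \eqref{discrete-subd}, the identity $\int_{\widetilde{\Omega}} (\det D^2 u_{m s_l,h})\, v\,dx = \int_{\widetilde{\Omega}} f_m\, v\,dx$ holds \emph{exactly} for every $l$, so no limiting argument is needed on this side. The paper instead introduces an auxiliary sequence $v_l \in C_c^\infty(\widetilde{\Omega})$ with $v_l \to v$ in $H^1$, decomposes $\int f_{m s_l,h}\, v$ through $v_l$ and $\Pi_h v_l$, derives the uniform $L^\infty$ bound \eqref{u-bound-det} on $\det D^2 u_{m s_l,h}$ via the inverse estimate, and passes to the limit diagonally in $l$. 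That machinery is unnecessary here precisely because $v$ already lies in the discrete test space; your shortcut is the right move.

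Your final paragraph is also well taken. The paper invokes Lemma~\ref{weakH0} to obtain \eqref{det-cvg1} even though the densities $\det D^2 u_{m s_l,h}$ are only piecewise continuous rather than in $C_b(\widetilde{\Omega})$ as that lemma's hypothesis requires. Inspection of the proof of Lemma~\ref{weakH0} in the appendix shows it uses only uniform boundedness of the $a_m$, not continuity, so the paper's citation is harmless in substance; your alternative route---bounding the total Monge--Amp\`ere mass uniformly in $l$ via the Lipschitz bound and running a three-$\varepsilon$ argument with $C_c$ approximants of $v$---is an equally valid and arguably more transparent justification.
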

 
\begin{proof}
Given $v \in V_h(\widetilde{\Omega}) \cap H_0^1(\widetilde{\Omega})$, let $v_l$ be a sequence of infinitely differentiable functions with compact support in $\widetilde{\Omega}$ such that $||v_l-v||_{1,2} \to 0$ as $l \to \infty$. We have by definition of
$f_{m s,h}$
\begin{align} \label{lim-unicity-p1}
 \int_{\widetilde{\Omega}} (\det D^2 u_{m s_l,h}) v \ud x  & =   \int_{\widetilde{\Omega}} f_{m s_l,h} v \ud x.
 \end{align}
 We have using \eqref{discrete-subd}
 \begin{align}  \label{lim-unicity-p2}
 \begin{split}
 \int_{\widetilde{\Omega}} f_{m s_l,h} v \ud x & = \int_{\widetilde{\Omega}} f_{m s_l,h} (v-v_l) \ud x + \int_{\widetilde{\Omega}} f_{m s_l,h} (v_l - \Pi_h (v_l) ) \ud x \\
 & \qquad \qquad + \int_{\widetilde{\Omega}} f_{m s_l,h} \Pi_h (v_l) \ud x \\
 & =  \int_{\widetilde{\Omega}} f_{m s_l,h} (v-v_l) \ud x + \int_{\widetilde{\Omega}} f_{m s_l,h} (v_l - \Pi_h (v_l) ) \ud x \\
 & \qquad \qquad + \int_{\widetilde{\Omega}} f_{m} \Pi_h (v_l) \ud x \\
 & =  \int_{\widetilde{\Omega}} (f_{m s_l,h} -f_{m}) (v-v_l) \ud x \\
  & \qquad \qquad + \int_{\widetilde{\Omega}} (f_{m s_l,h} -f_{m}) (v_l - \Pi_h (v_l) ) \ud x  + \int_{\widetilde{\Omega}} f_{m}  v \ud x.
 \end{split}
 \end{align} 
 By the inverse estimate \eqref{inverse} 
\begin{align*}
||\det D^2 u_{m s,h}||_{0,\infty,\widetilde{\Omega}} & \leq C || u_{m s,h}||_{2,\infty,\widetilde{\Omega}}^d \\
& \leq C h^{-2 d} || u_{m s,h}||_{0,\infty,\widetilde{\Omega}}^d.
\end{align*}
Hence by \eqref{int-Schauder2}
\begin{equation} \label{u-bound-det}
||\det D^2 u_{m s,h}||_{0,\infty,\widetilde{\Omega}} \leq C_{h},
\end{equation}
for a constant $C_{h}$ which depends on $h$ but is independent of $s$.

Since $f_{m}$ is uniformly bounded on $\tir{\Omega}$, it follows from \eqref{u-bound-det}
%\begin{align*} \end{align*}
\begin{equation} \label{det-cvg3}
\bigg|  \int_{\widetilde{\Omega}} (f_{m s_l,h} -f_{m}) (v-v_l) \ud x \bigg| \leq C_{} ||v-v_l||_{1,2} \to 0 \, \text{as} \, l \to \infty.
\end{equation}
Finally, since $v \in V_h(\widetilde{\Omega})$, we have $\Pi_h (v) = v$ and hence
\begin{align*}
 \int_{\widetilde{\Omega}} (f_{m s_l,h} -f_{m}) (v_l - \Pi_h (v_l) ) \ud x &=  \int_{\widetilde{\Omega}} (f_{m s_l,h} -f_{m}) (v_l - v ) \ud x \\
 & \qquad  +  \int_{\widetilde{\Omega}} (f_{m s_l,h} -f_{m}) ( \Pi_h (v- v_l) ) \ud x.
\end{align*}
By Schwarz inequality, \eqref{u-bound-det} and \eqref{schum}
\begin{align*}
\bigg|  \int_{\widetilde{\Omega}} (f_{m s_l,h} -f_{m}) ( \Pi_h (v- v_l) ) \ud x \bigg| \leq C_{h} || \Pi_h (v- v_l)||_{0,2} \leq C_h  || v- v_l||_{1,2} \to 0 \, \text{as} \, l \to \infty. 
\end{align*}
Arguing again as in \eqref{det-cvg3}, it follows that
\begin{equation} \label{det-cvg4}
\int_{\widetilde{\Omega}} (f_{m s_l,h} -f_{m}) (v_l - \Pi_h (v_l) ) \ud x \to 0 \, \text{as} \, l \to \infty.
\end{equation}
 We conclude by \eqref{det-cvg1}--\eqref{det-cvg4} that as $l \to \infty$
\begin{equation*} %\label{det-cvg1}
\int_{\widetilde{\Omega}} (\det D^2 u_{m s_l,h}) v \ud x \to  \int_{\widetilde{\Omega}} f_{m} v \ud x. 
\end{equation*}
 By the unicity of the limit 
 $$
  \int_{\widetilde{\Omega}} (\det D^2 u_{h}) v \ud x =  \int_{\widetilde{\Omega}} f_{m} v \ud x. 
 $$
 %We conclude that 
 That is, the limit $u_{h}$ solves \eqref{main-problem}. The existence of a solution to \eqref{main-problem} is proved. %The convergence of the discretization follows from Theorem \ref{exh-arg}. %In summary we have the following theorem
 \end{proof}

\section{Solvability of the discrete problems for $C^0$ approximations} \label{cvg-C0}
The arguments of the proof of Theorem \ref{cvg-c1-app} extends to the case of $C^0$ approximations to yield for $v \in V_h(\widetilde{\Omega}) \cap H_0^1(\widetilde{\Omega})$,
$$
\sum_{K \in \mathcal{T}_h }\int_{K \cap \widetilde{\Omega}} (\det D^2 u_{m s_l,h}) v \ud x \to \sum_{K \in \mathcal{T}_h }\int_{K \cap \widetilde{\Omega}}   f_{m} v \ud x. 
$$
It remains to show that as $l \to \infty$
$$
\sum_{K \in \mathcal{T}_h }\int_{K \cap \widetilde{\Omega}} (\det D^2 u_{m s_l,h}) v \ud x \to \sum_{K \in \mathcal{T}_h }\int_{K \cap \widetilde{\Omega}}   (\det D^2 u_{h}) v \ud x. 
$$
For this we need an extension of Lemma \ref{weakH0} to piecewise convex functions. This is the subject of Theorem \ref{w-p-c} below. We conclude that the analogue of Theorem \ref{cvg-c1-app} holds for $C^0$ approximations as well, i.e. the following theorem holds.

\begin{thm} \label{cvg-c0-app}
 Let $V_h(\widetilde{\Omega})$ denote a finite dimensional space of $C^0$ functions satisfying the assumptions of approximation property and inverse estimates of section \ref{finite-dim}. Then Problem \eqref{main-problem} has a piecewise convex solution $u_{h}$. %  and there exists a subsequence $(m,h_l)$ such that $u_{m,h_l}$ converges uniformly on compact subsets of $\Omega$ to the unique convex solution $u$ of \eqref{m1}.
 \end{thm}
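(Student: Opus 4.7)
The plan is to adapt the proof of Theorem \ref{cvg-c1-app} to the $C^0$ setting, interpreting every Monge-Amp\`ere integral as a sum over elements. First, I would invoke Theorem \ref{exh-arg} to produce a piecewise convex $u_{h} \in V_h(\widetilde{\Omega})$ satisfying $u_{h} = u_{m}$ on $\partial \widetilde{\Omega}$, obtained as the pointwise (hence uniform on compact subsets) limit of a subsequence $u_{m s_l, h}$ of solutions of \eqref{m1h-sub-m}. Since $u_{m s,h}$ is piecewise convex, on each $K \in \mathcal{T}_h(\widetilde{\Omega})$ the restriction $u_{m s,h}|_K$ is a convex polynomial, and thus the elementwise function $f_{m s,h}|_K := \det D^2 u_{m s,h}|_K$ coincides with the density of the classical Monge-Amp\`ere measure $M[u_{m s,h}|_K]$ on $K$.

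For a test function $v \in V_h(\widetilde{\Omega}) \cap H_0^1(\widetilde{\Omega})$, I would next choose $v_l \in C_c^\infty(\widetilde{\Omega})$ with $\|v - v_l\|_{1,2} \to 0$ and sum, over $K \in \mathcal{T}_h(\widetilde{\Omega})$, the decomposition used in the proof of Theorem \ref{cvg-c1-app}:
\begin{align*}
\sum_{K} \int_{K \cap \widetilde{\Omega}} f_{m s_l, h} \, v \ud x
&= \sum_{K} \int_{K \cap \widetilde{\Omega}} (f_{m s_l, h} - f_{m})(v - v_l) \ud x \\
&\qquad + \sum_{K} \int_{K \cap \widetilde{\Omega}} (f_{m s_l, h} - f_{m})(v_l - \Pi_h v_l) \ud x + \int_{\widetilde{\Omega}} f_{m} \, v \ud x,
\end{align*}
where I use $\Pi_h v = v$ and apply the discrete equation \eqref{m1h-sub-m} to $\Pi_h v_l \in V_h(\widetilde{\Omega}) \cap H_0^1(\widetilde{\Omega})$. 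Applying the inverse estimate \eqref{inverse} elementwise, together with the uniform interior Schauder estimate of Lemma \ref{lem:int-Schauder}, yields an analogue of \eqref{u-bound-det}, namely $\|f_{m s_l, h}\|_{0,\infty,\widetilde{\Omega}} \leq C_h$ uniformly in $s_l$. The Cauchy-Schwarz inequality together with the approximation property \eqref{schum} then forces the two error sums on the right to vanish as $l \to \infty$, exactly as in \eqref{det-cvg3}--\eqref{det-cvg4}.

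To finish, I would invoke the forthcoming Theorem \ref{w-p-c} to obtain
$$
\sum_{K} \int_{K \cap \widetilde{\Omega}} (\det D^2 u_{m s_l, h}) v \ud x \longrightarrow \sum_{K} \int_{K \cap \widetilde{\Omega}} (\det D^2 u_{h}) v \ud x
$$
as $l \to \infty$, and equate this limit with $\int_{\widetilde{\Omega}} f_{m} v \ud x$ to conclude that $u_h$ satisfies \eqref{main-problem}. The principal obstacle, which the author has isolated as Theorem \ref{w-p-c}, is precisely this elementwise weak-convergence statement: Lemma \ref{weak-s} guarantees convergence of the measures $M[u_{m s_l, h}|_K]$ to $M[u_{h}|_K]$ on each $K$ only when tested against continuous functions of compact support in the interior of $K$, whereas a generic $v \in V_h(\widetilde{\Omega}) \cap H_0^1(\widetilde{\Omega})$ does not vanish on interelement faces. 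Overcoming this will require combining the $H^1$-regularity of $v$ across faces with the uniform elementwise bound on the densities, in a manner parallel to the proof of Lemma \ref{weakH0} but implemented face-by-face across the triangulation.
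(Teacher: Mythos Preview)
Your proposal is correct and mirrors the paper's argument for Theorem~\ref{cvg-c0-app}: carry the decomposition of Theorem~\ref{cvg-c1-app} over element by element to obtain $\sum_K \int_{K}(\det D^2 u_{m s_l,h})v\to\int_{\widetilde\Omega} f_m v$, and invoke Theorem~\ref{w-p-c} together with Lemma~\ref{weakH0} and the uniform bound~\eqref{u-bound-det} for the convergence to $\sum_K\int_K(\det D^2 u_h)v$.

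The only divergence is in your final paragraph, where you anticipate establishing the elementwise weak convergence by a face-by-face density argument exploiting the $H^1$-regularity of $v$ across interelement boundaries. The paper's proof of Theorem~\ref{w-p-c} takes a different, purely measure-theoretic route: it uses the equivalent characterization~\eqref{weak-cvg-equiv} of weak convergence in terms of $\liminf$ on open sets and $\limsup$ on compact sets, applies the classical result on each $\accentset{\circ}{K}$, and sums over $K$ using subadditivity of $\liminf$/$\limsup$. This completely sidesteps any analysis of what happens near faces; the passage from compactly supported continuous test functions to $v\in V_h\cap H_0^1(\widetilde\Omega)$ is then handled afterward by Lemma~\ref{weakH0}, exactly as in the $C^1$ case.
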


Let $O$ be an open subset of $\R^d$. We recall that if $v \in C^2(O)$, the Monge-Amp\`ere measure $M[v]$ associated with $v$ on $O$ is given by $M[v](B) = \int_B \det D^2 v \ud x$ for any Borel subset of $O$. Also, if $v_m$ is  a sequence of convex functions on $O$ which converge uniformly to $v$ on compact subsets of $O$, $\det D^2 v_m \to \det D^2 v$ weakly as measures, i.e. 
$$
\int_K p(x) \det D^2 v_m(x) \ud x \to \int_K p(x) \det D^2 v(x) \ud x,
$$
for all continuous functions $p$ with compact support in $O$.

Equivalently, see \cite[section 1.9]{Evans-Gariepy} , 
\begin{align}
\begin{split} \label{weak-cvg-equiv}
M[v](A) & \leq \liminf_{m \to \infty} M[v_m](A), A \subset O,  A \, \text{open} \\
M[v](C) & \geq \limsup_{m \to \infty} M[v_m](C), C \subset O, C \, \text{compact}.
\end{split}
\end{align}

Assume $\Omega$ open and convex. We make the assumption that $\Omega$ is the finite union, indexed by $\mathcal{T}_h$, of closed subsets $K$ with nonempty interiors.  Let $v$ be a piecewise polynomial, piecewise $C^2$ on $\Omega$ and denote by $D^2 v$ (by an abuse of notation) its piecewise Hessian. 

We want to extend the weak convergence result of Monge-Amp\`ere measures  to piecewise convex functions. %The proof is analogous to the weak convergence result for Monge-Amp\`ere measures associated with convex functions. 
We first define new notions of Monge-Amp\`ere measures for piecewise convex functions.
 % and Lipschitz continuous boundaries. 

%{\bf Normal mapping associated with a piecewise convex function}
\subsection{Partial normal mapping associated with a piecewise convex function}
Let $x_0 \in \Omega$ such that $x_0 \in \accentset{\circ}{K}$ for some $K \in \mathcal{T}_h$. We define
$$
N_v(x_0) = \{ \, q \in \R^d, q \in N_{v_{|\accentset{\circ}{K}}}(x_0) \, \text{for some}\, K \in \mathcal{T}_h\, \ \text{such that} \ x_0 \in \accentset{\circ}{K} \}.
$$
Thus $q \in N_v(x_0)$ if for all $K$ such that $x_0 \in \accentset{\circ}{K}$,
$$
  v(x) \geq v(x_0) + q \cdot (x-x_0), \, \text{for all} \, x \in \accentset{\circ}{K}.
$$
We do not define $N_v(x_0)$ for $x_0 \in \partial K$.  
Given $E \subset \Omega$, we define $$ 
N_v (E)= \sum_{K \in \mathcal{T}_h} N_v(E\cap \accentset{\circ}{K}), %, M[u](E) = |N_u (E)|. 
$$
and the partial Monge-Amp\`ere measure associated to a piecewise convex function $v$ as
\begin{align} \label{MA-p-c}
	M[v](E) = |N_v (E)| = \sum_{K \in \mathcal{T}_h } M[v|_{\accentset{\circ}{K}}]  (E\cap \accentset{\circ}{K}).
\end{align}
 If $v \in C^2(\accentset{\circ}{K} )$ and is convex on $\accentset{\circ}{K}$ for all $ K$, then
$$
M[v] (E) =   \sum_{K \in \mathcal{T}_h} \int_{E \cap \accentset{\circ}{K}} \det D^2 v(x) \ud x.
$$
We will also use the notation $\det D^2 v$ for $M[v] (E) $ when $v$ is piecewise convex.

\subsection{Weak convergence of partial Monge-Amp\`ere measures associated with piecewise convex functions}

Let $D \subset \Omega$ is compact. We claim that $D \cap \accentset{\circ}{K}$ is also compact in $\accentset{\circ}{K}$. Assume that $D \cap \accentset{\circ}{K} \subset \cup_{i \in I} U_i, U_i$ open in $\accentset{\circ}{K}$ for all $i$. We have $U_i$ open in $\Omega$ as well. Since $D$ and $K$ are closed, $D \cap K$ is closed and hence $\Omega \setminus D \cap K$ is open. Therefore $(\Omega \setminus D \cap K) \cup (\cup_{i \in I} U_i)$ is an open covering of $D$ which has a finite subcovering $(\Omega \setminus D \cap K) \cup (\cup_{j \in J} U_J)$. It follows that $\cup_{j \in J} U_J$ is a finite  subcovering of $D \cap \accentset{\circ}{K}$.

We also recall that for two sequences $a_m$ and $b_m$,
\begin{align}
\liminf (a_m + b_m) & \geq \liminf a_m + \liminf b_m \label{limf}\\
\limsup (a_m + b_m) & \leq \limsup a_m + \limsup b_m. \label{lims}
\end{align}

We claim
\begin{thm} \label{w-p-c}
Assume that $v_m$ is a sequence of piecewise convex, piecewise $C^2$ functions on $\Omega$ which converge uniformly on compact subsets of $\Omega$ to $v$, which is also then piecewise convex. Then
$$
\int_{\Omega} p(x) \det D^2 v_m(x) \ud x \to \int_{\Omega} p(x) \det D^2 v (x) \ud x,
$$
for all continuous functions $p$ with compact support in $\Omega$.
\end{thm}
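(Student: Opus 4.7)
The plan is to reduce the statement to an elementwise claim on each open element $\accentset{\circ}{K}$, apply the classical weak-convergence Lemma \ref{weak-s} there, and handle the fact that the test function $p$ need not vanish on $\partial K$ by means of a cutoff together with a tightness argument.

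By the definition \eqref{MA-p-c} of the partial Monge-Amp\`ere measure and the piecewise $C^2$ regularity of $v_m$, both sides of the claim split as finite sums:
\begin{align*}
\int_\Omega p \det D^2 v_m \ud x &= \sum_{K \in \mathcal{T}_h} \int_{\accentset{\circ}{K}} p \ud M[v_m|_{\accentset{\circ}{K}}], \\
\int_\Omega p \det D^2 v \ud x &= \sum_{K \in \mathcal{T}_h} \int_{\accentset{\circ}{K}} p \ud M[v|_{\accentset{\circ}{K}}].
\end{align*}
Since $\mathcal{T}_h$ is finite it suffices to establish, for each fixed $K$, that $\int_{\accentset{\circ}{K}} p \ud M[v_m|_{\accentset{\circ}{K}}] \to \int_{\accentset{\circ}{K}} p \ud M[v|_{\accentset{\circ}{K}}]$.

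Fix such a $K$. The restrictions $v_m|_{\accentset{\circ}{K}}$ are convex on the open convex set $\accentset{\circ}{K}$ and converge uniformly on compact subsets of $\accentset{\circ}{K}$ to the convex function $v|_{\accentset{\circ}{K}}$, so Lemma \ref{weak-s} yields $M[v_m|_{\accentset{\circ}{K}}] \to M[v|_{\accentset{\circ}{K}}]$ weakly on $\accentset{\circ}{K}$. The obstacle is that $p|_{\accentset{\circ}{K}}$ is continuous but generally fails to have compact support in $\accentset{\circ}{K}$, because $p$ may be nonzero on $\partial K \cap \Omega$. To bridge this, for each $\epsilon > 0$ set $K_\epsilon = \{ x \in K : d(x,\partial K) \geq \epsilon\}$ (a compact subset of $\accentset{\circ}{K}$) and choose a continuous cutoff $\phi_\epsilon: \Omega \to [0,1]$ equal to $1$ on $K_\epsilon$ and vanishing outside $K_{\epsilon/2}$. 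Then $p \phi_\epsilon$ is continuous with compact support in $\accentset{\circ}{K}$ and Lemma \ref{weak-s} applied to it gives, for each fixed $\epsilon$,
$$\int_{\accentset{\circ}{K}} p\phi_\epsilon \ud M[v_m|_{\accentset{\circ}{K}}] \longrightarrow \int_{\accentset{\circ}{K}} p\phi_\epsilon \ud M[v|_{\accentset{\circ}{K}}] \text{ as } m \to \infty.$$

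What remains, and what I expect to be the principal obstacle, is the uniform tightness estimate $\sup_m M[v_m|_{\accentset{\circ}{K}}](\accentset{\circ}{K}\setminus K_\epsilon) \to 0$ as $\epsilon \to 0$, so that the boundary remainder $|\int p(1-\phi_\epsilon) \ud M[v_m|_{\accentset{\circ}{K}}]| \leq \|p\|_\infty M[v_m|_{\accentset{\circ}{K}}](\accentset{\circ}{K}\setminus K_\epsilon)$ (and its analogue for $v$) becomes small uniformly in $m$. The plan is to exploit the piecewise $C^2$ regularity of $v_m$: in the intended application the $v_m$ are piecewise polynomials of fixed degree $k$ whose coefficients converge uniformly, so $\|\det D^2 v_m\|_{L^\infty(K)} \leq C$ independent of $m$, from which tightness follows since $M[v_m|_{\accentset{\circ}{K}}](\accentset{\circ}{K}\setminus K_\epsilon) \leq C\, |\accentset{\circ}{K}\setminus K_\epsilon| \to 0$. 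For the limit measure, the Portmanteau inequality \eqref{weak-cvg-equiv} applied to the compact $K_\epsilon$ gives $\limsup_m M[v_m|_{\accentset{\circ}{K}}](K_\epsilon) \leq M[v|_{\accentset{\circ}{K}}](K_\epsilon)$, so the uniform boundedness of $M[v_m|_{\accentset{\circ}{K}}](K)$ transfers to $M[v|_{\accentset{\circ}{K}}]$; since $\accentset{\circ}{K}\setminus K_\epsilon \downarrow \emptyset$ as $\epsilon \to 0$, monotone continuity of the limit measure furnishes the analogous vanishing. A standard $\epsilon/3$ argument then combines the cutoff weak convergence with the uniform boundary smallness to yield the elementwise limit, and summation over the finitely many $K \in \mathcal{T}_h$ completes the proof.
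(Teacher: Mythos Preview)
Your route differs from the paper's and, as you yourself flag, the tightness step is where the real work sits. It is also where a genuine gap appears relative to the stated hypotheses: the uniform bound $\sup_m\|\det D^2 v_m\|_{L^\infty(K)}<\infty$ that you need for $M[v_m|_{\accentset{\circ}{K}}](\accentset{\circ}{K}\setminus K_\epsilon)\to 0$ uniformly in $m$ does not follow from the theorem's assumptions. The statement only requires the $v_m$ to be piecewise $C^2$, piecewise convex, and to converge uniformly on compact subsets; none of this controls second derivatives near $\partial K$. You supply the missing bound by importing the extra structure of the intended application (fixed-degree piecewise polynomials with convergent coefficients), which is adequate for the use the paper actually makes of the result but means you have proved a strictly weaker statement than the one announced.

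The paper sidesteps the tightness issue by not passing through test functions restricted to elements at all. Instead of attempting $\int_{\accentset{\circ}{K}} p\,dM[v_m|_{\accentset{\circ}{K}}]\to\int_{\accentset{\circ}{K}} p\,dM[v|_{\accentset{\circ}{K}}]$ elementwise, it verifies the Portmanteau characterization \eqref{weak-cvg-equiv} directly on $\Omega$: for open $A\subset\Omega$ it writes $M[v](A)=\sum_K M[v|_{\accentset{\circ}{K}}](A\cap\accentset{\circ}{K})$, applies the elementwise $\liminf$ inequality on each $\accentset{\circ}{K}$ (where $A\cap\accentset{\circ}{K}$ is open), and uses $\sum\liminf\leq\liminf\sum$; for compact $C\subset\Omega$ it argues analogously with the $\limsup$ inequality and $\limsup\sum\leq\sum\limsup$. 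Because open and compact are taken relative to $\Omega$ rather than to individual elements, no cutoff inside $\accentset{\circ}{K}$ is ever needed and the boundary-layer mass $M[v_m|_{\accentset{\circ}{K}}](\accentset{\circ}{K}\setminus K_\epsilon)$ never has to be estimated.
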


\begin{proof} %[Proof of Theorem \ref{w-p-c}]
%We first assume that $u_k$ is piecewise smooth. 
It is enough to prove \eqref{weak-cvg-equiv} with $O$ replaced by $\Omega$.

Let $A \subset \Omega$ be open. Then $A \cap  \accentset{\circ}{K}$ is open in both $\Omega$ and $ \accentset{\circ}{K}$. Since $v_m|_{\accentset{\circ}{K}}$ converges to $v|_{\accentset{\circ}{K}}$ uniformly on compact subsets of $\accentset{\circ}{K}$, we have
$$
M[v|_{\accentset{\circ}{K}}](A  \cap  \accentset{\circ}{K} )  \leq \liminf_{m \to \infty} M[v_m|_{\accentset{\circ}{K}}](A  \cap  \accentset{\circ}{K}).
$$
Thus by \eqref{MA-p-c} and \eqref{lims}
\begin{align*}
M[v] (A) & =   \sum_{K \in \mathcal{T}_h} M[v|_{\accentset{\circ}{K}}]  (A \cap \accentset{\circ}{K}) \leq \sum_{K \in \mathcal{T}_h}   \liminf_{m \to \infty} M[v_m|_{\accentset{\circ}{K}}](A  \cap  \accentset{\circ}{K}) \\
& \leq   \liminf_{m \to \infty}  \sum_{K \in \mathcal{T}_h} M[v_m|_{\accentset{\circ}{K}}](A  \cap  \accentset{\circ}{K})  =   \liminf_{m \to \infty} M[v_m](A). 
\end{align*}

Next, let $C \subset \Omega$ be compact. We recall that  $C \cap  \accentset{\circ}{K}$ is compact in $ \accentset{\circ}{K}$. Thus
$$
M[v|_{\accentset{\circ}{K}}](C  \cap  \accentset{\circ}{K} )  \geq \limsup_{m \to \infty} M[v_m|_{\accentset{\circ}{K}}](C  \cap  \accentset{\circ}{K}).
$$
Thus by \eqref{MA-p-c} and \eqref{limf}
\begin{align*}
M[v] (C) & =   \sum_{K \in \mathcal{T}_h} M[v|_{\accentset{\circ}{K}}]  (C \cap \accentset{\circ}{K}) \geq \sum_{K \in \mathcal{T}_h}   \limsup_{m \to \infty} M[v_m|_{\accentset{\circ}{K}}](C  \cap  \accentset{\circ}{K}) \\
& \geq   \limsup_{m \to \infty}  \sum_{K \in \mathcal{T}_h} M[v_m|_{\accentset{\circ}{K}}](C  \cap  \accentset{\circ}{K})  =   \limsup_{m \to \infty} M[v_m](C). 
\end{align*}
This completes the proof.
\end{proof}

\section{Convergence of the discretization} \label{no-reg}

We have 
\begin{thm} \label{noreg}
Under the assumptions set forth in the introduction, the piecewise convex solution $u_h$ of Problem \ref{main-problem} (given by Theorem \ref{cvg-c0-app}) converges uniformly on compact subsets of $\widetilde{\Omega}$, as $h \to 0$, to the solution $\tilde{u}$ of \eqref{m2} which is convex on $\widetilde{\Omega}$ and continuous up to the boundary. % for $u \in \mathcal{U}$.
\end{thm}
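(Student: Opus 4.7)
The plan is a standard compactness-plus-uniqueness argument. It suffices to prove that every subsequence $h_n \to 0$ admits a further subsequence along which $u_{h_n}$ converges uniformly on compact subsets of $\widetilde{\Omega}$ to $\tilde{u}$; uniqueness of the limit then forces the entire family $u_h$ to converge. Recall that by unicity of Aleksandrov solutions applied to the pair $u_m$ and $\tilde{u}$ (both of which solve Monge--Amp\`ere on $\widetilde{\Omega}$ with the same boundary data and right-hand side), one has $\tilde{u} = u_m|_{\widetilde{\Omega}}$, so it is enough to identify the limit with $u_m$.

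First I would extract a convergent subsequence. By Theorem \ref{exh-arg}, the piecewise convex functions $u_{h_n}$ are uniformly bounded on compact subsets of $\widetilde{\Omega}$, uniformly in $n$. Lemma \ref{Arzela}, together with the observation in the preliminaries that local equicontinuity extends to piecewise convex functions, yields a further subsequence (still denoted $u_{h_n}$) converging pointwise, and hence uniformly on compact subsets of $\widetilde{\Omega}$, to a piecewise convex function $u^*$.

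Next I would identify the equation satisfied by $u^*$. Fix $\phi \in C_c^\infty(\widetilde{\Omega})$ and, for $h_n$ small enough that $\operatorname{supp}\phi$ is well separated from $\partial \widetilde{\Omega}$, test \eqref{main-problem} with $v_h = \Pi_{h_n}\phi \in V_{h_n}(\widetilde{\Omega}) \cap H_0^1(\widetilde{\Omega})$. Exactly as in the proof of Theorem \ref{cvg-c1-app}, the uniform bound \eqref{u-bound-det} on $\|\det D^2 u_{h_n}\|_{0,\infty,\widetilde{\Omega}}$ together with the approximation property \eqref{schum} lets one replace $\Pi_{h_n}\phi$ by $\phi$ with vanishing error, so
\[
\sum_{K \in \mathcal{T}_{h_n}} \int_{K \cap \widetilde{\Omega}} (\det D^2 u_{h_n}) \phi \, dx \;\longrightarrow\; \int_{\widetilde{\Omega}} f_m \phi \, dx.
\]
On the other hand, Theorem \ref{w-p-c}, applied to the uniformly-on-compacts convergent sequence $u_{h_n} \to u^*$, gives
\[
\sum_{K \in \mathcal{T}_{h_n}} \int_{K \cap \widetilde{\Omega}} (\det D^2 u_{h_n}) \phi \, dx \;\longrightarrow\; \int_{\widetilde{\Omega}} \phi \, \det D^2 u^*.
\]
Combining these two limits yields $\det D^2 u^* = f_m$ in the sense of (partial) Monge--Amp\`ere measures on $\widetilde{\Omega}$. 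For the boundary data, since $u_{h_n} = u_m$ at the Lagrange points of $\partial \widetilde{\Omega}$ and $u_m \in C(\overline{\Omega})$, a standard uniform-interpolation estimate on $\partial \widetilde{\Omega}$ combined with Lemma \ref{Giaquinta-Modica} identifies the Hausdorff limit of the upper graphs and gives $u^* = u_m$ on $\partial \widetilde{\Omega}$.

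The main obstacle I anticipate lies in the $C^0$ case, where $u^*$ is only piecewise convex, so the Aleksandrov uniqueness theorem does not apply directly to conclude $u^* = u_m$ from the two identities above. To close the argument one has to exploit the fact that, as $h_n \to 0$, the mesh becomes arbitrarily fine, so the partial Monge--Amp\`ere measure of $u^*$ should coincide with the classical Aleksandrov measure of its convex envelope; combined with the boundary trace $u^* = u_m = u_m^*$ and positivity of $f_m$, this should force $u^* = u_m$ on $\widetilde{\Omega}$. The $C^1$ case is easier: by the remark in Section \ref{cvg} piecewise convex plus $C^1$ implies convex, so $u^*$ is convex on $\widetilde{\Omega}$ and Theorem \ref{ex-Alex} applies directly. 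Once $u^* = u_m = \tilde{u}$ is established, every subsequence has a further subsequence with the same limit, completing the proof.
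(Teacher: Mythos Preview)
Your overall compactness-plus-uniqueness strategy is sound, but the identification step differs substantially from the paper's and contains real gaps.

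\textbf{First gap.} The bound \eqref{u-bound-det} reads $\|\det D^2 u_{ms,h}\|_{0,\infty,\widetilde{\Omega}} \le C_h$ with a constant that \emph{explicitly depends on} $h$ (it comes from an inverse estimate and blows up like $h^{-2d}$). In the paper it is used with $h$ fixed and $s\to\infty$. You invoke it along $h_n\to 0$ to control $\int (\det D^2 u_{h_n})(\Pi_{h_n}\phi-\phi)$, but $C_{h_n}\|\Pi_{h_n}\phi-\phi\|$ need not vanish for the range $k\ge 2$, $d\le 3$ assumed in the paper.

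\textbf{Second gap.} Theorem \ref{w-p-c} is stated and proved for a \emph{fixed} triangulation $\mathcal{T}_h$: the partial Monge--Amp\`ere measure \eqref{MA-p-c} is defined relative to that partition, and both the sequence and its limit are piecewise convex on the \emph{same} $\mathcal{T}_h$. You apply it to $u_{h_n}$ on refining meshes $\mathcal{T}_{h_n}$; the limit $u^*$ is not piecewise polynomial on any fixed mesh, so the hypotheses are not met. You flag the ensuing uniqueness difficulty for piecewise convex $u^*$, but the fix via convex envelopes is only a sketch.

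\textbf{What the paper does instead.} The paper never passes to the limit in the discrete equation. It exploits the \emph{construction} of $u_h$ in Theorem \ref{exh-arg} as a limit in $s$ of $u_{ms,h}$, and uses the chain
\[
|v-u_m| \le |v-u_{h_l}| + |u_{h_l}-u_{ms_l,h_l}| + |u_{ms_l,h_l}-u_{ms_l}| + |u_{ms_l}-u_m|
\]
on a compact $K\subset\widetilde{\Omega}$. The first term is small by the extracted subsequence; the second by the definition of $u_{h_l}$; the third by the error estimate of Theorem \ref{smooth-Omega-m} combined with the interior Schauder bound of Lemma \ref{lem:int-Schauder}, which makes the constant independent of $s$; the fourth by Theorem \ref{subdomain-cvg}. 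This directly yields $v=u_m$ on every compact subset, without ever identifying a PDE for $v$ or appealing to Aleksandrov uniqueness. The paper's route thus sidesteps all three issues above by using the smooth intermediates $u_{ms}$ as a bridge rather than the discrete equation \eqref{main-problem}.
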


\begin{proof}

We recall from Theorem \ref{exh-arg} that the function $u_h$ is uniformly bounded on compact subsets of $\widetilde{\Omega}$. It follows from Lemma \ref{Arzela} that there exists a subsequence $u_{h_l}$ which converges pointwise to a piecewise convex function $v$. The latter is continuous on $\widetilde{\Omega}$ as it is locally finite. Moreover the convergence is uniform on compact subsets of $\widetilde{\Omega}$.

Recall also from Theorem \ref{exh-arg} that $u_h$ is obtained as a subsequence in $s$ of the approximations $u_{m s,h}$ of smooth solutions $u_{m s}$ which converge to $u_{m}$ uniformly on compact subsets of $\Omega$.

Let $K$ be a compact subset of $\widetilde{\Omega}$. There exists a subsequence $u_{m, s_l, h}$ which converges uniformly to $u_h$ on $K$.  By the uniform convergence of $u_{m s}$ to $u_{m}$ 
%and of $u_m$ to $u$ 
on $K$, we may assume that $u_{m, s_l}$ converges uniformly to $u_{m}$ on $K$.

Let now $\epsilon >0$. %For $h$ sufficiently small, we have $K \subset \Omega$.
Since $u_{h_l}$ converges uniformly on $K$ to $v$, $\exists l_0$ such that $\forall l \geq l_0$ $|u_{h_l}(x) - v(x)|< \epsilon/6$ for all $x \in K$.
 
%By definition $u_{h_l}$ is the uniform limit on $K$ of $u_{ms,h_l}$ for subsequences in $m$ and $s$.  Thus 
There exists $l_1 \geq 0$ such that for all $l \geq \max \{ \,l_0,l_1 \, \}$, %and $s_l$ such that $\forall m \geq m, s \geq s_l$ 
$|u_{m s_l,h_l}(x) - u_{h_l}(x)|< \epsilon/6$ for all $x \in K$.

Moreover, there exists $l_2 \geq 0$ such that for all $l \geq \max \{ \,l_0,l_1, l_2 \, \}$, $|u_{m s_l}(x) - u_{m}(x)|< \epsilon/6$ for all $x \in K$.

%By Lemma \ref{inf-local} 
Similarly to \eqref{analogous}, we have on $K$,  $|u_{m s,h_l}(x) - u_{m s}(x)|\leq C_{m} h_l$ for all $x \in K$. We recall that the constant $C_{m}$ is independent of $s$ but depends also on $\widetilde{\Omega}$.

We conclude that for $ l \geq \max \{ \,l_0,l_1, l_2 \, \}$, $|u_{m}(x) - v(x) | <  \epsilon/2 + C h_l$ for all $x \in K$.
We therefore have for all $\epsilon >0 $ $|u_{m}(x) - v(x) | < \epsilon$. We conclude that $u_{m}=v$ on $K$. 

Since $u_h=u_{m}$ on $\partial \widetilde{\Omega}$, it follows that $v=u_{m}$ on $\partial \widetilde{\Omega}$. This proves that $u_{m}=v$ on $\widetilde{\Omega}$.

The limit $u_{m}$ being unique, we conclude that $u_h$ converges uniformly on compact subsets of $\widetilde{\Omega}$ to $\tilde{u}$.

\end{proof}

 \section{Remarks} \label{sec-rem}
  We make the abuse of notation of denoting by $D^2 w_h$ the piecewise Hessian of $w_h \in V_h(\widetilde{\Omega})$. 
 Let $\lambda_1(D^2 w_h)$ denotes the smallest eigenvalue of $D^2 w_h$.
 
 \subsection{Strict piecewise convexity of the discrete solution} \label{big-ass}
%It was not necessary to emphasize the strict piecewise convexity of the approximations in the previous sections. 
% Recall that $f \geq c_0 >0$. %Since mollification produces a decreasing sequence, see for example Lemma 2.3 of \cite{Trudinger97b}, we may assume that $f_m \geq c_0 >0$.  
%Since $f_m$ converges uniformly to $f$ on $\tir{\Omega}$, we may also assume that $f_m$ is uniformly bounded below by a constant $c_2$ and above by a constant $c_3$. 
% We know from \cite{Awanou-Std01,AwanouPseudo10} that the discrete solution $u_{ms,h}$ is strictly convex with %smallest eigenvalue bounded below by 
%$\lambda_1(D^2 u_{m s,h}) \geq c_{00}$ and  $\det D^2 u_{ms,h} \geq c_{00}$ for 
% a constant $c_{00}$ which depends on $c_2, c_3$ the lower and upper bounds of $f_m$ on $\Omega$ and is independent of $h$.
 
% Since $u_{ms,h}$ is a piecewise polynomial, and $u_{m,h}$ is the pointwise limit of a subsequence of $u_{ms,h}$, and convergence of polynomials is equivalent to convergence of their coefficients, we conclude that $D^2 u_{m,h}$ is positive definite, where $ D^2 u_{m,h}$ is computed element by element. The same arguments apply to the discrete solution $u_h$ which is therefore piecewise strictly convex.
 
 Let $x_0 \in \widetilde{\Omega}$. If necessary, by identifying $u_h$ with $u_h + \epsilon |x-x_0|^2$, where $\epsilon$ is taken to be close to machine precision, we may assume that the solution $u_h$ is piecewise strictly convex. 
 
In the case of $C^0$ approximations, a direct argument can be given if we assume that  $k \geq d+1$. For $C^1$ approximations, we would need $k \geq 2(d+1)$.

Assume that $\det D^2 u_h$ (computed piecewise) is non zero on a set of non zero Lebesgue measure. Then since $\det D^2 u_h$ is a piecewise polynomial, it must vanish identically on an element $K_0$. 
Let $v$ denote the unique polynomial of degree $d+1$ which vanishes identically on all faces of $K_0$ and with average 1 on $K_0$. We denote as well by $v$ its extension by 0 on all other elements. Then $v >0$ in $K_0$ and $v \in V_h(\widetilde{\Omega}) \cap H_0^1(\widetilde{\Omega})$ and thus
$$
\int_{\widetilde{\Omega}} f v \ud x =\int_{K_0} f v \ud x >0.
$$
On the other hand
$$
\int_{\widetilde{\Omega}} f v \ud x  = \sum_{K \in \mathcal{T}_h}  \int_{K \cap \widetilde{\Omega}} (\det D^2 u_h) v \ud x = \int_{K_0} (\det D^2 u_h) v \ud x=0,
$$
since  $\det D^2 u_h=0$ on $K_0$. Contradiction. We therefore have $\det D^2 u_h > 0$ in $\widetilde{\Omega}$.
%This concludes the proof.

For $C^1$ approximations, we take $v^2$ as the test function and the same argument applies.
 
  \subsection{Uniqueness of the discrete solution} \label{uni-disc}
  In the case of $C^1$ approximations, there is a unique solution in a sufficiently small neighborhood of $u_{h}$ if, as in section \ref{big-ass}, one makes the assumption  that the solution is piecewise strictly convex. % (up to machine precision).
  
  Define $B_{\rho}(u_{h}) = \{ \, w_h \in V_h, ||w_h-u_{h}||_{2,\infty} \leq \rho\, \}$. Then since $\lambda_1(D^2 u_{h}) \geq c_{00}$, by the continuity of the eigenvalues of a matrix as a function of its entries, $w_h $ is strictly convex for $\rho$ sufficiently small and $\rho$ independent of $h$. 
  
  Let then $u_{h}$ and $v_{h}$ be two solutions of \eqref{main-problem} in $B_{\rho}(u_{h})$. By the mean value theorem, see for example \cite{AwanouPseudo10}, we have for $w_h \in V_h(\widetilde{\Omega}) \cap H_0^1(\widetilde{\Omega})$
  \begin{align*}
  0 & = \int_{\widetilde{\Omega}} (\det D^2 u_{h} - \det D^2 v_{h}) w_h \ud x \\
  & = 
  -\int_0^1 \bigg\{ \int_{\widetilde{\Omega}}
[ (\cof(1-t) D^2 v_{h} + t D^2 u_{h} (D u_{h} - D v_{h}) ] \cdot D w_h \ud x \bigg\} \ud t.
  \end{align*}
For each $t \in [0,1]$, $(1-t) v_{h} + t  u_{h} \in B_{\rho}(u_{h})$ and is therefore strictly convex, that is
$$
 (\cof(1-t) D^2 v_{h} + t D^2 u_{h}) D (v_{h} -  u_{h}) ] \cdot D (v_{h}-u_{h}) \geq C |v_{h} -u_{h}|_1^2, C >0.
$$
Since $u_{h}=v_{h}=u_m$ on $\partial \widetilde{\Omega}$, we have $v_{h}-u_{h}=0$ on $\partial \widetilde{\Omega}$ and so integrating both sides, we obtain $ |v_{h} -u_{h}|_1=0$. But $u_{h}=v_{h}=g$ on $\partial \widetilde{\Omega}$ and therefore $u_{h}=v_{h}$.

For the uniqueness of the $C^0$ approximation, one would have to repeat the fixed point argument of \cite{Awanou-Std01} which were written under the assumption that $u$ is smooth strictly convex. Similar arguments apply for 
$u_{h}$ and in $B_{\rho}(u_{h})$. One may refer to \cite{AwanouPseudo10} where similar arguments are implemented in the context of $C^1$ approximations.

%For $C^0$ approximations, one has to assume that the data $f$ and $\tilde{g}$ are in $C^{k+1}(\tir{\Omega})$. Or implement the method with the approximations $f_m$ and $g_m$. 
%If one assumes that the data $f$ and $\tilde{g}$ are in $C^{k+1}(\tir{\Omega})$, or implement the method with the approximations $f_m$ and $g_m$, 

%\subsection{Regularization of the data is not necessary}

  \subsection{The degenerate case $f \geq 0$ and the case of unbounded $f$}
We now argue that our assumption $0 < c_0 \leq f \leq c_1$ is not restrictive. 

For $M >0$, if one defines $f_M$ by
$$
f_M(x) = f(x) \ \text{for} \ f(x) \leq M, \ \text{and} \ 0 \ \text{otherwise},
$$
we showed in \cite{MongeC1Alex} how the Aleksandrov solution of \eqref{m1} is a limit of solutions of Monge-Amp\`ere equations with right hand side $f_M$ and boundary data $g$.

On the other hand, the constant $c_0$ may be assumed to be close to machine precision. Moreover, in the case $f$ bounded with $f \geq 0$, for $\epsilon >0$, it is a simple consequence of \cite[Lemma 5.1]{Hartenstine2006} that solutions of
Monge-Amp\`ere equations with right hand side $f+\epsilon$ and boundary data $g$ converge uniformly on compact subsets to the Aleksandrov solution of \eqref{m1} as $\epsilon \to 0$.  

\section{Appendix} \label{appendix}

We give in this section the proof of some technical results.

\begin{proof}[Proof of Lemma~\ref{lem:int-Schauder}]

In the homogeneous case, i.e. for $g_{m}=0$, the result can be inferred from \cite{Caffarelli1990}. See also
\cite[Theorem 2.16]{Figalli14}.

In the non homogeneous case, it seems that the only genuine interior Schauder estimates for \eqref{m1-ms}, with constant depending only on the diameter of the compact subset $ K \subset \Omega_s$ and not on $\Omega_s$
is to rely on the corresponding result for the complex Monge-Amp\`ere equation in \cite[Theorem 4]{Dinew}. See also the corresponding A.M.S. Mathematical Review. For the convenience of the reader, we finish the proof with a brief introduction to the complex Monge-Amp\`ere equation.

It follows from \cite[Theorem 4]{Dinew} that
$$
||u_{m s}||_{C^2(\widetilde{\Omega})} \leq C_{m},
$$
where $C_{m}$ depends only on $m, d, c_2, ||f_{m}||_{C^1(\tir{\Omega})}$ and $ d(\widetilde{\Omega}, \partial \Omega)$. The estimate for higher order derivatives follows from standard elliptic regularity arguments.
For example differentiating the equation one time, and taking into account the smoothness of $f_{m}$ and the $C^{2}$ estimate, one obtains a second order linear equation which, because of the strict convexity of the solution $u_{m s}$, is uniformly elliptic on compact subsets of $\Omega$ and with solution a first derivative of $u$. 
The interior Schauder estimates for uniformly elliptic linear equations \cite[Theorem 6.2]{Gilbarg2001} then applies to give the desired estimate for the third derivatives. Repeating this process is known as a bootstrapping argument.

Let us illustrate the technique with the two dimensional Monge-Amp\`ere equation
$$
u_{xx} u_{yy} - u_{xy}^2 = f(x,y),
$$
where we use another standard notation for derivatives for simplicity. 
Put $v=u_x$. Differentiating with respect to $x$, we get the second order equation
$$
u_{yy} v_{xx} + u_{xx} v_{yy} -2 u_{xy} v_{xy} = f_x. 
$$
By the strict convexity of $u$, the equation is uniformly elliptic and hence
$$
||v||_{C^2(\widetilde{\Omega})} \leq C,
$$
with $C$ depending on $\max_{\Omega} v$, $ ||f_{x}||_{C^1(\tir{\Omega})}, \widetilde{\Omega}$, $d(\widetilde{\Omega}, \partial \Omega)$, the smallest eigenvalue of $D^2 u$ and a bound on the $C^{2,\alpha}$ norm of $u$. The latter bound implies an upper bound on
the eigenvalues of $D^2 u$, and since $\det D^2 u =f \leq C$, we obtain a positive lower bound for the smallest eigenvalue of $D^2 u$. A similar argument applies to $u_y$ and thus $||u||_{C^3(\widetilde{\Omega})} \leq C$,
with $C$ depending only on $ ||f_{}||_{C^2(\tir{\Omega})}, \widetilde{\Omega}$ and $d(\widetilde{\Omega}, \partial \Omega)$.

We finish with a brief introduction to the complex Monge-Amp\`ere equation. First the domain $\Omega \subset \R^d$ is identified with a convex domain of $\mathbb{C}^d$. Let now $u$ be a strictly convex smooth solution and
put $z_i = x_i + \sqrt{-1} \, y_i, i=1, \ldots,d$. We can then view $u$ as a function of $z$ defined by $u(z) = u(x)$. Same for $f$ and $g$. The complex Monge-Amp\`ere equation is given by
\begin{align}\label{m1-C}
\begin{split}
\det \bigg( \frac{\partial^2 u}{\partial z_i \partial \tir{z}_j} \bigg)_{i,j=1,\ldots,d} & = f \ \text{in} \ \Omega \\
u & = g \ \text{on} \  \partial \Omega,
\end{split}
\end{align}
where
\begin{align*}
\frac{\partial u}{\partial z_i } & = \frac{1}{2} \bigg( \frac{\partial u}{\partial x_i } - \sqrt{-1} \, \frac{\partial u}{\partial y_i } \bigg)  \\
\frac{\partial u}{\partial \tir{z}_j } & = \frac{1}{2} \bigg( \frac{\partial u}{\partial x_j } + \sqrt{-1} \, \frac{\partial u}{\partial y_i } \bigg).
\end{align*}
This clearly reduces to \eqref{m1} for real-valued functions defined on a convex domain of $\R^n$. The analogue of convex solution is a plurisubharmonic function, i.e. a function for which the Hessian matrix in \eqref{m1-C} is positive.

\end{proof}

\begin{proof}[Proof of Lemma~\ref{Arzela}]
For $p_j \in \partial u_j(x)$ and $x \in \Omega$, we have by \cite[Lemma 3.2.1]{Guti'errez2001}
$$
|p_j| \leq \frac{|u_j(x)| }{d(x,\partial \Omega)} \leq \frac{C}{d(x,\partial \Omega)},
$$
for a constant $C$ independent of $j$. Arguing as in the proof of \cite[Lemma 1.1.6]{Guti'errez2001}, it follows that the sequence $u_j$ is uniformly Lipschitz and hence equicontinuous on compact subsets of $\Omega$. By the Arzela-Ascoli theorem, \cite[p. 179]{Royden}, we conclude that the result holds.

\end{proof}

\begin{proof}[Proof of Lemma~\ref{subdomain-cvg}]
%Since by assumption $\tilde{g}$ is continuous on $\tir{\Omega}$ hence bounded on $\Omega$, the sequence $g_m$ is uniformly bounded. 
By convexity of $z_s$, see \cite[Theorem 3.4.7]{Niculescu}, we have
$$
z_s(x) \leq \max_{ x \in \Omega_s}  b_s \leq \max_{ x \in \Omega}  b_s \leq C, \forall x \in \Omega_s,
$$
for a constant $C>0$.

Let now $C$ denote the minimum of $b_s$ on $\partial \Omega_s$. Since $z_s = b_s$ on $\partial \Omega_s$, we have $z_s-C \geq 0$ on $\partial \Omega_s$. Either $z_s(x) - C \geq 0$ for $x \in \Omega_s$, or
by 
Aleksandrov's maximum principle,  \cite[Lemma 3.5 ]{Rauch77} or \cite[Proposition 6.15]{GutierrezNguyen07}, %gives for all $x \in \Omega_m$
$$
( -(  z_s(x) -C) )^n \leq c_n (\diam \Omega_s)^{n-1} d(x,\partial \Omega_s) \int_{\Omega_s} a_s \ud x,
$$ 
where $c_n$ is a constant which depends only on $n$. We note that $a_s$ is uniformly bounded on $\Omega$. It follows that the sequence $z_s$ is bounded below on $\Omega_s$.

By Lemma \ref{Arzela}, the sequence $z_s$ being bounded has a pointwise convergent subsequence, also denoted by $z_s$, to a limit function $z$. But since $z_s$ is a sequence of convex functions on $\Omega_s$, and $\Omega_s$ increases to $\Omega$, the limit function $z$ is a convex function on $\Omega$ and the convergence is uniform on compact subsets of $\Omega$. 
By \cite[Proposition 2.6]{Savin13} the corresponding upper graphs converge in the Hausdorff distance, i.e.  $z_s \to z$.

By Lemma \ref{Giaquinta-Modica} $a_s \to a$ and $b_s \to b$. Thus by Proposition \ref{savin}, or  \cite[Proposition 2.4 ]{Savin13}, we have
$$
\det D^2 z = a \, \text{in} \, \Omega, 
z = b \, \text{on} \, \partial \Omega.
$$ 
%By unicity of the continuous convex solution of \eqref{m1}, $v=u$. 
The proof is completed.

\end{proof}

\begin{proof}[Proof of Lemma~\ref{weakH0}]
Since  $p \in H^{1}_0(\Omega)$, there exists a sequence $p_l$ of infinitely differentiable functions with compact support in $\Omega$ such that $||p_l-p||_{1,2} \to 0$ as $l \to \infty$. 
We have
\begin{align*}
 \int_{\Omega} (a_m-a) p \ud x =   \int_{\Omega} (a_m-a) (p-p_l) \ud x +  \int_{\Omega} (a_m-a) p_l \ud x.
\end{align*}
By assumption $ \int_{\Omega} (a_m-a) p_l \ud x \to 0$ as $m \to \infty$. Moreover, since $\Omega$ is bounded and $||a_m||_{0,\infty} \leq C$ for all $m$, we have
\begin{align*}
\bigg|  \int_{\Omega} (a_m-a) (p-p_l) \ud x \bigg| & \leq ||a_m-a||_{0,\infty} \bigg|  \int_{\Omega} p-p_l \ud x \bigg| \\
&  \leq C (||a_m||_{0,\infty} + || a ||_{0,\infty}) ||p-p_l||_{0,2} \\
& \leq C  ||p-p_l||_{0,2} \to 0 \, \text{as} \, l \to \infty.
\end{align*}
This  concludes the proof.
\end{proof}

%\begin{proof} \end{proof}

\section*{Acknowledgements}
The author is grateful to Wilfrid Gangbo, Michael Neilan, Ricardo H. Nochetto and Wujun Zhang for many useful discussions. 
This work began when the author was supported in part by a 2009-2013 Sloan Foundation Fellowship and continued while the author was in residence at
the Mathematical Sciences Research Institute (MSRI) in Berkeley, California, Fall 2013. The MSRI receives major funding from the National Science
Foundation under Grant No. 0932078 000. The author was partially supported by NSF DMS grant No 1319640.

%\bibliographystyle{amsplain}
%\bibliographystyle{spmpsci}  
%\bibliography{MongeMixed.bib}
%\bibliography{/Users/gerardawanou/Desktop/MathHome4/Monge/SplineMonge08}
%\bibliography{SplineMonge08}

\begin{thebibliography}{10}
\providecommand{\url}[1]{{#1}}
\providecommand{\urlprefix}{URL }
\expandafter\ifx\csname urlstyle\endcsname\relax
  \providecommand{\doi}[1]{DOI~\discretionary{}{}{}#1}\else
  \providecommand{\doi}{DOI~\discretionary{}{}{}\begingroup
  \urlstyle{rm}\Url}\fi

\bibitem{Awanou-Iso}
Awanou, G.: Isogeometric method for the elliptic {M}onge-{A}mp\`ere equation.
\newblock In: Approximation Theory, XIV (San Antonio, TX, 2013), pp. 1--13
  (2014)

\bibitem{AwanouPseudo10}
Awanou, G.: Pseudo transient continuation and time marching methods for
  {M}onge-{A}mp\`ere type equations.
\newblock Advances in Computational Mathematics pp. 1--29 (2014).
\newblock \doi{10.1007/s10444-014-9391-y}.
\newblock \urlprefix\url{http://dx.doi.org/10.1007/s10444-014-9391-y}

\bibitem{Awanou2009a}
Awanou, G.: Spline element method for {M}onge-{A}mp\`ere equations.
\newblock BIT Numerical Mathematics pp. 1--22 (2014).
\newblock \doi{10.1007/s10543-014-0524-y}.
\newblock \urlprefix\url{http://dx.doi.org/10.1007/s10543-014-0524-y}

\bibitem{Awanou-Std01}
Awanou, G.: Standard finite elements for the numerical resolution of the
  elliptic {M}onge-{A}mp\`ere equation: classical solutions.
\newblock IMA Journal of Numerical Analysis  (2014).
\newblock \doi{10.1093/imanum/dru028}.
\newblock
  \urlprefix\url{http://imajna.oxfordjournals.org/content/early/2014/05/30/imanum.dru028.abstract}

\bibitem{MongeC1Alex}
Awanou, G.: Smooth approximations of the {A}leksandrov solution of the
  {M}onge-{A}mp\`ere equation.
\newblock Commun. Math. Sci. \textbf{13}(2), 427--441 (2015)

\bibitem{Aw-Matamba}
Awanou, G., {Matamba Messi}, L.: A variational method for computing numerical
  solutions of the {M}onge-{A}mp\`ere equation.
\newblock Manuscript

\bibitem{Baginski96}
Baginski, F.E., Whitaker, N.: Numerical solutions of boundary value problems
  for $k$-surfaces in $\mathbb{R}^3$.
\newblock Numer. Methods Partial Differential Equations \textbf{12}(4),
  525--546 (1996)

\bibitem{Bakelman1994}
Bakelman, I.J.: Convex analysis and nonlinear geometric elliptic equations.
\newblock Springer-Verlag, Berlin (1994).
\newblock With an obituary for the author by William Rundell, Edited by Steven
  D. Taliaferro

\bibitem{Blocki97}
B{\l}ocki, Z.: Smooth exhaustion functions in convex domains.
\newblock Proc. Amer. Math. Soc. \textbf{125}(2), 477--484 (1997)

\bibitem{Bohmer2008}
B{\"o}hmer, K.: On finite element methods for fully nonlinear elliptic
  equations of second order.
\newblock SIAM J. Numer. Anal. \textbf{46}(3), 1212--1249 (2008)

\bibitem{Brenner2010b}
Brenner, S.C., Gudi, T., Neilan, M., Sung, L.Y.: {$C^0$} penalty methods for
  the fully nonlinear {M}onge-{A}mp\`ere equation.
\newblock Math. Comp. \textbf{80}(276), 1979--1995 (2011)

\bibitem{Brenner02}
Brenner, S.C., Scott, L.R.: The mathematical theory of finite element methods,
  \emph{Texts in Applied Mathematics}, vol.~15, second edn.
\newblock Springer-Verlag, New York (2002)

\bibitem{Brix}
Brix, K., Hafizogullari, Y., Platen, A.: Solving the {M}onge--{A}mp\`ere
  equations for the inverse reflector problem.
\newblock Math. Models Methods Appl. Sci. \textbf{25}(5), 803--837 (2015)

\bibitem{Caffarelli1984}
Caffarelli, L., Nirenberg, L., Spruck, J.: The {D}irichlet problem for
  nonlinear second-order elliptic equations. {I}. {M}onge-{A}mp\`ere equation.
\newblock Comm. Pure Appl. Math. \textbf{37}(3), 369--402 (1984)

\bibitem{Caffarelli1990}
Caffarelli, L.A.: Interior {$W^{2,p}$} estimates for solutions of the
  {M}onge-{A}mp\`ere equation.
\newblock Ann. of Math. (2) \textbf{131}(1), 135--150 (1990)

\bibitem{Dahmen91}
Dahmen, W.: Convexity and {B}ernstein-{B}\'ezier polynomials.
\newblock In: Curves and surfaces ({C}hamonix-{M}ont-{B}lanc, 1990), pp.
  107--134. Academic Press, Boston, MA (1991)

\bibitem{Davydov10}
Davydov, O., Saeed, A.: Stable splitting of bivariate splines spaces by
  {B}ernstein-{B}\'ezier methods.
\newblock In: Curves and surfaces, \emph{Lecture Notes in Comput. Sci.}, vol.
  6920, pp. 220--235. Springer, Heidelberg (2012)

\bibitem{Davydov12}
Davydov, O., Saeed, A.: Numerical solution of fully nonlinear elliptic
  equations by {B}\"ohmer's method.
\newblock J. Comput. Appl. Math. \textbf{254}, 43--54 (2013)

\bibitem{Figalli13b}
De~Philippis, G., Figalli, A.: Second order stability for the
  {M}onge-{A}mp\`ere equation and strong {S}obolev convergence of optimal
  transport maps.
\newblock Anal. PDE \textbf{6}(4), 993--1000 (2013)

\bibitem{Figalli14}
De~Philippis, G., Figalli, A.: The {M}onge-{A}mp\`ere equation and its link to
  optimal transportation.
\newblock Bull. Amer. Math. Soc. (N.S.) \textbf{51}(4), 527--580 (2014)

\bibitem{Dinew}
Dinew, S., Zhang, X., Zhang, X.: The {$C^{2,\alpha}$} estimate of complex
  {M}onge-{A}mp\`ere equation.
\newblock Indiana Univ. Math. J. \textbf{60}(5), 1713--1722 (2011)

\bibitem{Evans-Gariepy}
Evans, L.C., Gariepy, R.F.: Measure theory and fine properties of functions.
\newblock Studies in Advanced Mathematics. CRC Press, Boca Raton, FL (1992)

\bibitem{Feng2009}
Feng, X., Neilan, M.: Analysis of {G}alerkin methods for the fully nonlinear
  {M}onge-{A}mp\`ere equation.
\newblock J. Sci. Comput. \textbf{47}(3), 303--327 (2011)

\bibitem{Feng2012}
Feng, X., Neilan, M.: Convergence of a fourth-order singular perturbation of
  the {$n$}-dimensional radially symmetric {M}onge--{A}mp\`ere equation.
\newblock Appl. Anal. \textbf{93}(8), 1626--1646 (2014)

\bibitem{Oberman2010a}
Froese, B., Oberman, A.: Convergent finite difference solvers for viscosity
  solutions of the elliptic {M}onge-{A}mp\`ere equation in dimensions two and
  higher.
\newblock SIAM J. Numer. Anal. \textbf{49}(4), 1692--1714 (2011)

\bibitem{Giaquinta-Modica}
Giaquinta, M., Modica, G.: Mathematical analysis.
\newblock Birkh\"auser Boston Inc., Boston, MA (2007).
\newblock Linear and metric structures and continuity

\bibitem{Gilbarg2001}
Gilbarg, D., Trudinger, N.S.: Elliptic partial differential equations of second
  order.
\newblock Classics in Mathematics. Springer-Verlag, Berlin (2001).
\newblock Reprint of the 1998 edition

\bibitem{GlowinskiICIAM07}
Glowinski, R.: Numerical methods for fully nonlinear elliptic equations.
\newblock In: I{CIAM} 07---6th {I}nternational {C}ongress on {I}ndustrial and
  {A}pplied {M}athematics, pp. 155--192. Eur. Math. Soc., Z\"urich (2009)

\bibitem{Guermond08}
Guermond, J.L., Popov, B.: {$L^1$}-approximation of stationary
  {H}amilton-{J}acobi equations.
\newblock SIAM J. Numer. Anal. \textbf{47}(1), 339--362 (2008/09)

\bibitem{Guti'errez2001}
Guti{\'e}rrez, C.E.: The {M}onge-{A}mp\`ere equation.
\newblock Progress in Nonlinear Differential Equations and their Applications,
  44. Birkh\"auser Boston Inc., Boston, MA (2001)

\bibitem{GutierrezNguyen07}
Guti{\'e}rrez, C.E., van Nguyen, T.: On {M}onge-{A}mp\`ere type equations
  arising in optimal transportation problems.
\newblock Calc. Var. Partial Differential Equations \textbf{28}(3), 275--316
  (2007)

\bibitem{Hartenstine2006}
Hartenstine, D.: The {D}irichlet problem for the {M}onge-{A}mp\`ere equation in
  convex (but not strictly convex) domains.
\newblock Electron. J. Differential Equations pp. No. 138, 9 pp. (electronic)
  (2006)

\bibitem{Lakkis11}
Lakkis, O., Pryer, T.: A finite element method for second order nonvariational
  elliptic problems.
\newblock SIAM J. Sci. Comput. \textbf{33}, 786--801 (2011)

\bibitem{Lakkis11b}
Lakkis, O., Pryer, T.: A finite element method for nonlinear elliptic problems.
\newblock SIAM J. Sci. Comput. \textbf{35}(4), A2025--A2045 (2013)

\bibitem{Niculescu}
Niculescu, C.P., Persson, L.E.: Convex functions and their applications.
\newblock CMS Books in Mathematics/Ouvrages de Math\'ematiques de la SMC, 23.
  Springer, New York (2006).
\newblock A contemporary approach

\bibitem{Oliker1988}
Oliker, V.I., Prussner, L.D.: On the numerical solution of the equation
  {$(\partial\sp 2z/\partial x\sp 2)(\partial\sp 2z/\partial y\sp
  2)-((\partial\sp 2z/\partial x\partial y))\sp 2=f$} and its discretizations.
  {I}.
\newblock Numer. Math. \textbf{54}(3), 271--293 (1988)

\bibitem{Rauch77}
Rauch, J., Taylor, B.A.: The {D}irichlet problem for the multidimensional
  {M}onge-{A}mp\`ere equation.
\newblock Rocky Mountain J. Math. \textbf{7}(2), 345--364 (1977)

\bibitem{Royden}
Royden, H.L.: Real analysis, third edn.
\newblock Macmillan Publishing Company, New York (1988)

\bibitem{Savin13}
Savin, O.: Pointwise {$C^{2,\alpha}$} estimates at the boundary for the
  {M}onge-{A}mp\`ere equation.
\newblock J. Amer. Math. Soc. \textbf{26}(1), 63--99 (2013)

\bibitem{Sulman11}
Sulman, M.M., Williams, J.F., Russell, R.D.: An efficient approach for the
  numerical solution of the {M}onge-{A}mp\`ere equation.
\newblock Appl. Numer. Math. \textbf{61}(3), 298--307 (2011)

\bibitem{Trudinger08}
Trudinger, N.S., Wang, X.J.: Boundary regularity for the {M}onge-{A}mp\`ere and
  affine maximal surface equations.
\newblock Ann. of Math. (2) \textbf{167}(3), 993--1028 (2008)

\end{thebibliography}

\end{document}